\newtheorem{Theorem}{Theorem}[section]
\newtheorem{Lemma}[Theorem]{Lemma}
\newtheorem{Corollary}[Theorem]{Corollary}
\theoremstyle{definition}
\newtheorem{Definition}[Theorem]{Definition}
\newtheorem*{Problem}{Problem}
\def\Longmapsto{\DOTSB\mapstochar\Longrightarrow}
\begin{document}

\title{Undecidability of the problem of recognizing axiomatizations for implicative propositional calculi}

\author{Grigoriy V. Bokov\\
        \small Department of Mathematical Theory of Intelligent Systems\\
        \small Lomonosov Moscow State University\\
        \small Moscow, Russian Federation\\
        \small E-mail: bokovgrigoriy@gmail.com}

\maketitle

\begin{abstract}
In this paper we consider propositional calculi, which are finitely axiomatizable extensions of intuitionistic implicational propositional calculus together with the rules of modus ponens and substitution. We give a proof of undecidability of the following problem for these calculi: whether a given finite set of propositional formulas constitutes an adequate axiom system for a fixed propositional calculus. Moreover, we prove the same for the following restriction of this problem: whether a given finite set of theorems of a fixed propositional calculus derives all theorems of this calculus. The proof of these results is based on a reduction of the undecidable halting problem for the tag systems introduced by Post.
\end{abstract}

\noindent \textbf{Keywords:} Classical and intuitionistic propositional calculi, implicational calculus, finite axiomatization, tag system.

\section{Introduction}

In general, a propositional calculus is given by a finite set of propositional formulas over some signature together with a finite set of rules of inferences. The problem of recognizing axiomatizations for a propositional calculus is formulated as follows: whether a given finite set of propositional formulas constitutes (axiomatizes) an adequate axiom system for this calculus, i.e., each formula of the calculus is derivable from a given set of formulas by the rules of the calculus. The question of decidability of this problem was proposed by Tarski in 1946~\cite{Sinaceur:2000}. In this paper we consider only the propositional calculus with the rules of modus ponens and substitution.

The undecidability of recognizing axiomatizations for the classical propositional calculus was obtained due to Linial and Post in 1949~\cite{LinialPost:49}. They gave sketch of proofs for a number of results, one of them expressible in the form that it is undecidable whether a given finite set of propositional formulas axiomatizes all classical tautologies. Note that they considered only formulas over the signature $\{ \neg, \vee \}$ and the rule of modus ponens was formulated appropriately. Later the proof of their result was restored by Davis~\cite[pp.~137--142]{Davis:58}, and a complete proof appeared in the work of Yntema~\cite{Yntema:64}.

For the intuitionistic propositional calculus over the signature $\left\{ \neg, \vee, \&, \to \right\}$ the same result was proved by Kuznetsov in 1963~\cite{Kuznetsov:63}. Moreover, he proved that this holds for every superintuitionistic calculus, i.e., a finitely axiomatizable extension of the intuitionistic propositional calculus. Particularly, this holds for the classical propositional calculus and the Linial and Post theorem.

In 1961, A. A. Markov (Jr.) proposed the following problem: is it decidable whether a given finite set of implicational propositional formulas, i.e., formulas over the signature $\{\to\}$, axiomatizes all classical implicational tautologies? Kuznetsov in~\cite{Kuznetsov:63} mentioned that this problem seems to be still open.

In 1994, Marcinkowski~\cite{Marcinkowski:94} proved that Markov's problem is undecidable. Moreover, Marcinkowski obtained a much stronger result: fix an implicational propositional tautology $A$ that is not of the form $B \to B$ for some formula $B$, then it is undecidable whether $A$ is derivable from a given finite set of implicational formulas by the rules of modus ponens and substitution.

Recently, Zolin in 2013~\cite{Zolin:2013} re-established the result of Kuznetsov for the superintuitionistic propositional calculus over the signatures $\{ \wedge, \to \}$ and $\{ \vee, \to \}$. It is based on the so-called tag systems introduced by Post~\cite{Post:43} and proposed in 2009 by Bokov~\cite{Bokov:2009} for the proof of the result of Linial and Post. Besides Zolin in~\cite{Zolin:2013} gave a detailed and useful historical survey of related results.

The aim of this paper is to prove the undecidability of the problem of recognizing axiomatizations for every superintuitionistic implicational propositional calculus over a signature containing the connective $\to$. By a superintuitionistic implicational propositional calculus we mean a finitely axiomatizable extension of intuitionistic implicational propositional calculus.

The paper is organized as follows. In the first part we introduce the basic notation, give a historical survey of related results, and state our main result. In the second part we reduce the halting problem for tag systems to the problem of recognizing axiomatizations for propositional calculi, and prove our main result.

\section{Preliminaries and results}

Let $\mathcal{V}$ be an infinite set of propositional variables. Letters $x, y, z, u$, etc., are used to denote propositional variables. The signature $\Sigma$ is a finite set of connectives. Each connective is associated with a unique, classical, two-valued truth-function. Usually connectives are binary or unary such as $\{\neg, \vee, \wedge, \to\}$.

Propositional formulas or $\Sigma$-formulas are built up from the signature $\Sigma$ and propositional variables from $\mathcal{V}$ in the usual way. Capital letters $A, B, C$, etc., are used to denote propositional formulas. Throughout the paper, we will omit the outermost parentheses in formulas and parentheses assuming the customary priority of connectives.

In this paper, we will consider arbitrary signatures containing the binary connective $\to$. Note that by Gladstone~\cite{Gladstone:65} we can suppose that the signature $\Sigma$ does not contain the symbol~$\to$, but there is some propositional formula having $x, y$ as sole variables, whose truth-table interpretation is ``$x$ implies $y$''. In this case we denote the specified formula simply by $x \to y$.

A \emph{propositional calculus} $P$ over a signature $\Sigma$ (or a $\Sigma$-\emph{calculus}) is a system consisting of a finite set $P$ of $\Sigma$-formulas referred to as \emph{axioms} and two rules of inference:

1) \emph{modus ponens}
\begin{equation*}
  A, A \to B \vdash B;
\end{equation*}

2) \emph{substitution}
\begin{equation*}
  A \vdash \sigma A,
\end{equation*}
where $\sigma A$ is the substitution instance of $A$, i.e., the result of applying the substitution $\sigma$ to the formula $A$.

Denote by $[P]$ the set of derivable (or provable) formulas of a calculus $P$. A \emph{derivation} in $P$ is defined from the axioms and the rules of inference in the usual way. The statement that a formula $A$ is drivable from $P$ is denoted by $P \vdash A$.

Let us introduce the following partial order relation on the set of all propositional calculus. We write $P_1 \leq P_2$ (or, equivalently, $P_2 \geq P_1$) if each drivable formula of $P_1$ is also drivable from $P_2$, i.e., if $[P_1] \subseteq [P_2]$. We write $P_1 \sim P_2$ and say that two calculi $P_1$ and $P_2$ are \emph{equivalent} if $[P_1] = [P_2]$. Finally, we write $P_1 < P_2$ if $[P_1] \subsetneq [P_2]$.

Denote by $\mathbf{Cl}_{\Sigma}$ the classical propositional calculus over a signature $\Sigma$, and by $\mathbf{Int}_{\Sigma}$ the intuitionistic propositional calculus over a signature $\Sigma$~\cite{Kleene:2002}. We assume that the signature $\Sigma$ of the intuitionistic propositional calculus $\mathbf{Int}_{\Sigma}$ is a subset of the following set of connectives $\{\wedge, \vee, \neg,  \to, \leftrightarrow, \top, \bot\}$.

Consider the intuitionistic implicational propositional calculus $\mathbf{Int}_{\{\to\}}$ with the set of axioms~\cite[p.69]{HilbertBernays:68}:

\smallskip \noindent
\begin{center}
\begin{tabular}{ll}
  $(\mathrm{A}_1)$ & $x \to (y \to x)$,  \\
  $(\mathrm{A}_2)$ & $(x \to (y \to z)) \to ((x \to y) \to (x \to z))$. \\
\end{tabular}
\end{center}
\smallskip

\noindent The classical implicational propositional calculus $\mathbf{Cl}_{\{\to\}}$ is obtained from $\mathbf{Int}_{\{\to\}}$ by adding the Peirce law $((x \to y) \to x) \to x$~\cite[p.52]{Tarski:83}.

Now we define some recognizing problems for a fixed propositional calculus $P_0$.

\begin{Problem}[Recognizing axiomatizations]
Given a propositional calculus $P$, determine whether $P_0 \sim P$.
\end{Problem}

\begin{Problem}[Recognizing extensions]
Given a propositional calculus $P$, determine whether $P_0 \leq P$.
\end{Problem}

\begin{Problem}[Recognizing completeness]
Given a propositional calculus $P$ such that $P \leq P_0$, determine whether $P_0 \leq P$.
\end{Problem}

The previous results can be summarized as follows.

\begin{Theorem}[Linial and Post, 1949]
The problems of recognizing axiomatizations, extensions, and completeness for $\mathbf{Cl}_{\{\neg, \vee\}}$ are undecidable.
\end{Theorem}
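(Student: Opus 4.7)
The plan is to reduce the halting problem for Post's tag systems --- known to be undecidable --- to the recognizing completeness problem for $\mathbf{Cl}_{\{\neg,\vee\}}$. Since the three problems are related by trivial reductions (recognizing completeness is precisely recognizing axiomatizations restricted to calculi $P$ with $P \leq \mathbf{Cl}_{\{\neg,\vee\}}$, and an algorithm for recognizing extensions would decide completeness as well), it suffices to establish undecidability for the completeness problem. All three undecidabilities then follow at once.

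Fix once and for all a universal tag system $T$ with alphabet $\{a_1, \ldots, a_n\}$, deletion number $k$, productions $a_i \mapsto w_i$, and a distinguished halt symbol $h$, so that the predicate ``$T$ halts on the input word $w$'' is undecidable. Given $w$, I would effectively construct a finite set $P_w$ of $\{\neg, \vee\}$-formulas, each a classical tautology (so automatically $P_w \leq \mathbf{Cl}_{\{\neg,\vee\}}$), such that $P_w \sim \mathbf{Cl}_{\{\neg,\vee\}}$ if and only if $T$ halts on $w$.

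The encoding proceeds through the defined implication $A \to B := \neg A \vee B$. I would represent each tag-system configuration $u = a_{i_1} \cdots a_{i_m}$ by an implicational formula $\Phi(u)$ built from dedicated propositional variables for the alphabet symbols, with position markers chosen so that distinct configurations yield non-unifiable formulas. Each production $a_i \mapsto w_i$ is then realised by an axiom of the shape $\Phi_{\text{pre}}(u) \to \Phi_{\text{post}}(u')$, a tautology under which a single step of $T$ becomes one application of modus ponens followed by substitution. A ``start'' axiom encodes $\Phi(w)$, while a final ``trigger'' axiom is arranged so that it is a classical tautology on its own but, once the halt formula $\Phi_{\text{halt}}$ is derivable, combines with it by modus ponens to yield a pre-chosen Post-complete axiom base for $\mathbf{Cl}_{\{\neg,\vee\}}$. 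Thus if $T$ halts on $w$ then $P_w$ derives a full axiomatization and hence every classical tautology, while if $T$ does not halt then $\Phi_{\text{halt}}$ is unreachable, the trigger is inert, and $P_w$ is strictly weaker than $\mathbf{Cl}_{\{\neg,\vee\}}$.

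The principal obstacle will be the rule of substitution, which permits replacing a variable by any $\{\neg,\vee\}$-formula and could in principle ``forge'' configuration formulas that do not correspond to genuine runs of $T$. The faithfulness lemma I would need is that $P_w \vdash \Phi(u)$ if and only if $u$ is reachable in $T$ from $w$; establishing it requires the encoding to carry enough structural invariants (distinct variables per alphabet position, separators, and a rigidity argument showing that only instances produced by the legitimate production schemes can take the shape of a $\Phi(u)$). Once this lemma is proved, the equivalence $P_w \sim \mathbf{Cl}_{\{\neg,\vee\}} \iff T \text{ halts on } w$ is immediate, and undecidability transfers to all three recognition problems.
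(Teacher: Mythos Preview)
The paper does not prove this theorem. It is stated as a historical result due to Linial and Post (1949), cited as background among ``previous results,'' with no proof given beyond a bibliographic pointer. There is therefore nothing in the paper to compare your argument against directly.

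That said, your plan mirrors in outline the strategy the paper \emph{does} carry out for its main theorem (the implicational case, Theorem~\ref{T:main}): reduce the halting problem for tag systems by encoding words as formulas, encoding productions as implicational axioms, and arranging a ``trigger'' so that reaching a short word releases a full axiomatization of the target calculus. In that sense your proposal is on the right track and is the standard route.

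However, what you have written is a plan, not a proof. The entire difficulty lies in the parts you defer: the concrete encoding $\Phi(u)$ and the faithfulness lemma. In the paper's implicational proof these are handled by a carefully designed code for letters (Lemma~\ref{L:AlphabeticFormulas} on non-unifiability of distinct $\mathcal{A}$-formulas), a marker formula $\rhd$ with specific non-unifiability properties (Lemma~\ref{L:Triangle}), a layered family of code types (Types~0--3), and an inductive invariant on derivation height (Lemma~\ref{L:FormOfDerivations}) that confines every derivable formula of small height to either a substitution instance of a legitimate code or of an inert axiom schema. Your proposal gestures at ``structural invariants'' and ``rigidity,'' but supplies none; without them the substitution rule really can forge spurious configuration formulas, and the equivalence you claim does not follow. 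A minor further point: tag systems in Post's sense halt when the current word is shorter than the deletion number, not via a designated halt symbol~$h$; your encoding should reflect that (as the paper's axiom~$\mathrm{H}$ does).
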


\begin{Theorem}[Kuznetsov, 1963]
Fix a calculus $P_0 \geq \mathbf{Int}_{\{\neg, \vee, \&, \to\}}$, then the problems of recognizing axiomatizations, extensions, and completeness for $P_0$ are undecidable.
\end{Theorem}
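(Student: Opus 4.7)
The plan is to reduce the halting problem for Post's tag systems, which is undecidable, to each of the three recognition problems for $P_0$ simultaneously. Given a tag system $T$ and an initial word $w$, I would construct a single finite calculus $P_{T,w}$ over the signature $\{\neg, \vee, \wedge, \to\}$ (and hence over any larger $\Sigma$) such that $P_{T,w} \leq P_0$ holds unconditionally, while $P_0 \leq P_{T,w}$ holds iff $T$ halts on $w$. This uniform construction settles recognizing axiomatizations, extensions, and completeness at once.

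For the encoding, fix once and for all a finite axiomatization $\mathcal{A}_0 = \{A_1, \ldots, A_n\}$ of $P_0$. Introduce fresh propositional variables $s_1, \ldots, s_m$ corresponding to the alphabet of $T$, plus an auxiliary variable $q$. Represent a tag configuration $s_{i_1} s_{i_2} \cdots s_{i_k}$ by a nested implicational formula with $q$ in the tail, and translate each production rule of $T$ into an implicational axiom of bounded size, so that one application of modus ponens corresponds to one computational step. Let $H$ be a distinguished formula, definable in the fresh variables, such that the tag simulation axioms together with the formula encoding $w$ derive $H$ iff the computation of $T$ on $w$ halts; moreover, $H$ should be an intuitionistic theorem (over the fresh variables), so it is in particular a theorem of $P_0$.

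To assemble $P_{T,w}$, combine the tag simulation axioms with guarded versions of $\mathcal{A}_0$, the simplest being $H \vee A_i$ for each $i$ (or $H \to A_i$, but the disjunctive guard is cleaner). All these axioms are theorems of $P_0$, giving $P_{T,w} \leq P_0$. If $T$ halts on $w$ then $P_{T,w} \vdash H$, and the intuitionistic schema $H \to (H \vee A_i \to A_i)$ recovers each $A_i$, so $P_0 \leq P_{T,w}$. For the converse, if $T$ does not halt I would build a Kripke model in which $H$ is false everywhere, the tag axioms are valid (because the simulation simply never produces $H$), each guarded axiom $H \vee A_i$ is valid (because $A_i$ is an intuitionistic theorem), yet some axiom of $\mathcal{A}_0$ fails; then $P_{T,w} \not\vdash A_i$ and the three problems all answer negatively.

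The main obstacle is the last step: producing such a separating model uniformly for every superintuitionistic $P_0$, since $P_0$ may strictly extend $\mathbf{Int}_{\{\neg,\vee,\wedge,\to\}}$ and thereby invalidate naive Kripke counterexamples. The key device is to keep the tag encoding in fresh variables disjoint from those of $\mathcal{A}_0$, and to prove a conservativity lemma asserting that the tag axioms do not generate any new $P_0$-underivable formulas in the old variables; the disjunction property of $\mathbf{Int}$ then transfers, through the guards, to show that $A_i$ remains underivable in $P_{T,w}$ unless $H$ becomes derivable. Once this conservativity-plus-disjunction-property argument is in place, the three decidability questions are reduced simultaneously to the halting of $T$ on $w$, and the theorem follows. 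The remaining loose end is signature management: for an arbitrary $\Sigma \supseteq \{\to\}$ containing formula-definable versions of the other connectives, I would invoke Gladstone's result cited in the preliminaries to translate the entire construction into $\Sigma$.
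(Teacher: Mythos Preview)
First, note that the paper does not actually prove this theorem: Kuznetsov's 1963 result is cited as prior work and stated without argument. The only detailed proof in the paper is that of Theorem~\ref{T:main}, the implicational analogue, so that is the natural point of comparison.

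Your proposal contains a concrete error that blocks the forward direction. You claim that from $H$ and the guard $H \vee A_i$ one recovers $A_i$ via ``the intuitionistic schema $H \to (H \vee A_i \to A_i)$''. This schema is not valid, intuitionistically or classically: if $H$ holds then $H \vee A_i$ holds regardless of $A_i$, so nothing about $A_i$ follows. The guard that actually works is the implicational one $H \to A_i$, which you mention and dismiss as less clean; this is precisely the form the paper uses in its axiom~$(\mathrm{H})$.

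The backward direction is also incoherent as written. You ask for a Kripke model in which $H$ fails everywhere, every guarded axiom $H \vee A_i$ is valid ``because $A_i$ is an intuitionistic theorem'', and yet some axiom of $\mathcal{A}_0$ fails. But the $A_i$ are axioms of an arbitrary $P_0 > \mathbf{Int}$ and need not be intuitionistic theorems at all; and if in your model both $H$ and some $A_j$ fail, then so does $H \vee A_j$, so the model does not validate $P_{T,w}$. The disjunction property you invoke is likewise unavailable for general superintuitionistic $P_0$ (classical logic is the obvious counterexample). Your fallback ``conservativity lemma'' with disjoint variables is the right instinct, but it is doing all the real work and is left entirely unspecified.

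For contrast, the paper's argument for Theorem~\ref{T:main} is purely syntactic. Lemma~\ref{L:FormOfDerivations} shows, by induction on derivation height, that up to the first point where a short-word code appears, every derivable formula is a substitution instance either of a code in $\mathsf{Code}_T(\omega)$ or of an axiom in $P_{T,P_0}$. The unifiability Lemmas~\ref{L:AlphabeticFormulas} and~\ref{L:Triangle} about the marker $\rhd$ and the letter codes drive this shape analysis; no Kripke models, no disjunction property, and the argument is uniform in $P_0$.
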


\begin{Theorem}[Marcinkowski, 1994] \label{T:Marcinkowski}
Fix a $\{\to\}$-tautology $A$ that is not of the form $B \to B$ for some formula $B$, then the problem of recognizing extensions for the $\{\to\}$-calculus $\{A\}$ is undecidable.
\end{Theorem}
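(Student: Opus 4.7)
The plan is to reduce the halting problem for Post's tag systems to the extension problem for the $\{\to\}$-calculus $\{A\}$. Given a tag system $T$ and an input word $w$, I would describe a computable construction of a finite set of $\{\to\}$-formulas $P_{T,w}$ such that $\{A\}\le P_{T,w}$, i.e., $P_{T,w}\vdash A$, if and only if $T$ eventually halts on $w$. Since the halting problem for tag systems is undecidable, this yields the desired conclusion.

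The first step is to encode tag-system configurations as $\{\to\}$-formulas. A configuration word $a_{i_1}a_{i_2}\cdots a_{i_n}$ over the tag alphabet can be represented by a right-associated implication of the form $a_{i_1}\to (a_{i_2}\to\cdots\to(a_{i_n}\to q)\cdots)$, where each $a_j$ is a distinguished variable and $q$ is a ``state'' variable marking the current end of the word. A transition rule of $T$ (delete a fixed-length prefix, append a fixed word at the end) becomes an axiom whose antecedents recognize the leading letter(s) of the current configuration and whose consequent rebuilds the new configuration by right-nested implications terminating in a fresh state variable. Under modus ponens with substitution, each such axiom simulates exactly one computation step of $T$.

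The second step is to interlock $A$ with the simulation. I would add to $P_{T,w}$ a ``halt'' axiom that derives $A$ (up to substitution) from any configuration matching the halting criterion of $T$, together with an ``initial'' axiom encoding the configuration $w$. If $T$ halts on $w$, iterating the simulation axioms produces the halting configuration and one further application of modus ponens delivers $A$. This is where the hypothesis that $A$ is not of the form $B\to B$ is essential: any tautology of that trivial shape is already derivable from the intuitionistic identity law $x\to x$ and therefore from a wide range of formula sets, so no reduction of this type can possibly succeed without excluding that case — the halt axiom must have enough non-trivial combinatorial structure to be triggered only by a genuine halting configuration.

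The hardest part will be the converse (soundness): showing that if $P_{T,w}\vdash A$ then $T$ halts on $w$. Substitution is extremely liberal — it can unify variables representing distinct tag-system letters and thereby create spurious derivations that short-circuit the intended simulation. To block this, the plan is to prove a structural normal-form theorem for derivations from $P_{T,w}$: by choosing the state and letter variables occurring in the axioms to be pairwise distinct and by tracking a suitable invariant on the occurrence pattern of these variables, one shows that every substitution used in a derivation of a formula of the shape of $A$ must instantiate the state variables faithfully, so that the derivation tree linearizes into a sequence of axiom applications mirroring an actual run of $T$ on $w$, terminated by the halt axiom. Carrying out this rigidity analysis — engineering the axioms so that substitution is forced to respect the encoding — is the technical heart of the reduction, and is the step where most of the work will lie.
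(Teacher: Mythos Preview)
The paper does not actually prove this theorem; it is stated there as a historical result attributed to Marcinkowski and cited without proof, so there is no argument in the paper to compare your proposal against directly.

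Your outline does, however, closely mirror the strategy the paper uses for its \emph{own} main result (Theorem~\ref{T:main}): reduce from the halting problem for tag systems, encode words as implicational formulas, add transition axioms simulating single computation steps, add a halt axiom that releases the target once a short word is reached, and prove a rigidity lemma constraining the shape of all derivations up to the first halting step. One concrete weakness in your sketch is worth naming: you propose to encode each alphabet letter by a distinguished \emph{variable} $a_j$. Substitution can freely identify such variables, collapsing the distinction between letters and wrecking the intended simulation --- exactly the failure mode you yourself flag in the last paragraph. The paper, for its own theorem, instead encodes each letter as a fixed formula in a single reserved variable $x^0$, chosen so that the codes of distinct letters (and more generally distinct $\mathcal{A}$-formulas) are pairwise non-unifiable (Lemma~\ref{L:AlphabeticFormulas}); this is precisely what makes the rigidity analysis (Lemma~\ref{L:FormOfDerivations}) go through. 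With letters-as-variables the ``structural normal-form theorem'' you gesture at is very unlikely to hold, and you would need to redesign the encoding along these non-unifiability lines before the soundness direction can be carried out.
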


Since the implicational calculi $\mathbf{Cl}_{\{\to\}}$ and $\mathbf{Int}_{\{\to\}}$ can be axiomatized by the following single formulas, as shown by  {\L}ukasiewicz~\cite{Lukasiewicz:48} and Meredith~\cite{Meredith:53},
\begin{align*}
  \mathbf{Cl}_{\{\to\}}  & \sim \{ ((x \to y) \to z) \to ((z \to x) \to (u \to x)) \} \\
  \mathbf{Int}_{\{\to\}} & \sim \{ ((x \to y) \to z) \to (u \to ((y \to (z \to v)) \to (y \to v))) \}
\end{align*}
the following result also makes sense.

\begin{Corollary}
The problems of recognizing axiomatizations, extensions, \\ and completeness for $\mathbf{Cl}_{\{\to\}}$ and the problem of recognizing extensions for $\mathbf{Int}_{\{\to\}}$ are undecidable.
\end{Corollary}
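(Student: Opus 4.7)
The plan is to derive the corollary from Marcinkowski's theorem (Theorem~\ref{T:Marcinkowski}) by specializing the tautology $A$ to the single axioms $L$ and $M$ for $\mathbf{Cl}_{\{\to\}}$ and $\mathbf{Int}_{\{\to\}}$ displayed above.

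First I would verify the hypotheses of Theorem~\ref{T:Marcinkowski} for $A \in \{L, M\}$. Both are classical $\{\to\}$-tautologies: $L$ is a theorem of $\mathbf{Cl}_{\{\to\}}$ by {\L}ukasiewicz, and $M$ is an intuitionistic---hence classical---tautology by Meredith. Moreover, neither has the form $B \to B$, since in each the two sides of the outermost implication are syntactically distinct. Because $\{L\} \sim \mathbf{Cl}_{\{\to\}}$, the condition $\{L\} \leq P$ coincides with $\mathbf{Cl}_{\{\to\}} \leq P$ for every $P$; so Theorem~\ref{T:Marcinkowski} with $A = L$ immediately yields undecidability of recognizing extensions for $\mathbf{Cl}_{\{\to\}}$, and the analogous step with $A = M$ handles $\mathbf{Int}_{\{\to\}}$.

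For axiomatizations and completeness of $\mathbf{Cl}_{\{\to\}}$, the additional step is to examine Marcinkowski's reduction from the tag-system halting problem and verify that the calculi $P$ it produces consist entirely of classical $\{\to\}$-tautologies, so that $P \leq \mathbf{Cl}_{\{\to\}}$ holds automatically. Once this is established, on Marcinkowski-produced inputs the premise $P \leq \mathbf{Cl}_{\{\to\}}$ of the completeness problem is satisfied, and the condition $P \sim \mathbf{Cl}_{\{\to\}}$ collapses to $\mathbf{Cl}_{\{\to\}} \leq P$; all three problems thus coincide on these inputs, and undecidability of extensions transfers to axiomatizations and completeness for $\mathbf{Cl}_{\{\to\}}$. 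The main obstacle is precisely this verification of the tautology-preservation property; one can either trace through Marcinkowski's encoding in~\cite{Marcinkowski:94} or re-derive the theorem via the tag-system-based reductions of~\cite{Bokov:2009} and~\cite{Zolin:2013}, whose constructed axioms are readily seen to be tautological.
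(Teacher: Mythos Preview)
Your approach matches the paper's: the corollary is stated there without proof, immediately after the {\L}ukasiewicz and Meredith single axioms, as a direct consequence of Marcinkowski's theorem applied with $A=L$ and $A=M$. You are in fact more careful than the paper, which leaves implicit the point you flag---that deducing undecidability of \emph{completeness} and \emph{axiomatizations} (not just extensions) for $\mathbf{Cl}_{\{\to\}}$ requires knowing that the calculi produced by Marcinkowski's reduction consist of classical tautologies, so that $P\le\mathbf{Cl}_{\{\to\}}$ holds on all reduction instances.
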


In 1930, Tarski~\cite{Tarski:83} proved that every propositional calculus, which contains the formulas $x \to ( y \to x )$ and $x \to ( y \to ( ( x \to ( y \to z ) ) \to z ) )$, can be axiomatized by a single formula. Since these formulas are derivable from $\mathbf{Int}_{\{\to\}}$, we have the following corollary of the Marcinkowski result.

\begin{Corollary}
\end{Corollary}
Fix a signature $\Sigma \supseteq \{\to\}$ and a $\Sigma$-calculus $P_0 \geq \mathbf{Int}_{\{\to\}}$, then the problem of recognizing extensions for $P_0$ is undecidable.

\begin{Theorem}[Zolin, 2013]
Fix a signature $\Sigma \supseteq \{\wedge, \to\}$ and a $\Sigma$-calculus $P_0 \geq \mathbf{Int}_{\{\wedge, \to\}}$, then the problems of recognizing axiomatizations, extensions, and completeness for $P_0$ are undecidable.
\end{Theorem}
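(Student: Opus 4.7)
The plan is to reduce the halting problem for Post tag systems, which is undecidable, to each of the three recognition problems for $P_0$ simultaneously. From a tag system $T$ with initial word $w_0$ I would construct a finite set $P_T$ of $\Sigma$-formulas with the properties (a) $P_T \leq P_0$ always, and (b) $P_0 \leq P_T$ if and only if $T$ halts on $w_0$. Condition (a) makes $P_T$ a legal input to the recognizing completeness problem, and (a) together with (b) gives $P_T \sim P_0$ iff $T$ halts, so the same reduction settles recognizing axiomatizations, extensions, and completeness in one stroke.

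For the encoding I would introduce a fresh propositional variable $x_a$ for each symbol $a$ of the tag alphabet, plus a ``tape tail'' variable $z$ and a distinguished ``halt variable'' $h$. A tag word $a_{i_1}\cdots a_{i_n}$ is represented by an implicational-conjunctive formula built uniformly from the $x_{a_{i_j}}$ and $z$; the availability of $\wedge$ in $\Sigma$ is exactly what makes this uniform encoding possible and is precisely what distinguishes this theorem from the pure $\{\to\}$-case handled by Marcinkowski. Each tag production ``on leading letter $a$, erase the first $v$ symbols and append the word $w$'' becomes a single implicational axiom whose use via modus ponens simulates one tag step. The initial configuration is posted as an axiom, any tape matching the halting condition derives $h$, and for each axiom $A$ of a fixed axiomatization of $P_0$ the ``collapse'' axiom $h \to A$ is added. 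If $T$ halts on $w_0$, then $h$ becomes derivable, hence every axiom of $P_0$, giving $P_0 \leq P_T$.

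The \emph{main obstacle} is the converse half of (b): showing that if $T$ does not halt on $w_0$, then some axiom of $P_0$ is not derivable from $P_T$. The substitution rule is the central complication, because an adversary may instantiate the tag-encoding variables by arbitrary $\Sigma$-formulas, threatening shortcut derivations that bypass the simulated computation. The counter is a semantic separation: construct a finite intuitionistic Kripke frame (equivalently, a finite Heyting algebra) in which every substitution instance of every formula of $P_T$ is valid and modus ponens preserves validity, but at least one axiom of $P_0$ is refuted. The frame is tuned so that $h$ attains its designated value only along valuations that mirror a genuine halting trace of $T$; under the non-halting assumption no such valuation arises, so the collapse axioms are inert and a chosen $P_0$-axiom remains unprovable from $P_T$.

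Once this separation lemma is in place, (b) follows in both directions and the undecidability of halting transfers to each of the three recognition problems for $P_0$, yielding the theorem. Preserving (a) while simultaneously ensuring that the separating model refutes some theorem of $P_0 \geq \mathbf{Int}_{\{\wedge, \to\}}$ under every substitution is the delicate step of the construction, and it is here that the choice of an intuitionistic (rather than Boolean) frame is essential: a Boolean model would collapse too many formulas and validate the $P_0$-axiom one is trying to refute.
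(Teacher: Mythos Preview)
The paper does not prove this theorem; it is quoted as Zolin's result, and the paper's own contribution is the analogous statement for the pure $\{\to\}$ signature. Nevertheless the architecture of Zolin's proof and of the paper's proof of Theorem~\ref{T:main} is the same, and it differs from your plan in one essential respect that I think is a genuine gap.

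Your reduction scheme (tag system $\Rightarrow$ calculus $P_T$ with an initial-word axiom, production axioms, and ``collapse'' axioms $h \to A$ for $A \in P_0$) is exactly right, and the forward half of (b) is fine. The problem is the converse half. You propose to separate $P_T$ from $P_0$ by a Kripke frame in which every substitution instance of every $P_T$-axiom is valid but some $P_0$-axiom $A$ fails. This cannot work as stated once $h$ is a propositional \emph{variable}: closure under substitution means that if $h \to A$ is to be valid under all valuations, then in particular it must be valid under the valuation sending $h$ to the top element (equivalently, substitute a provable formula for $h$), which forces $A$ itself to be valid on the frame. So any frame validating your collapse axioms already validates every $P_0$-axiom, and the separation collapses. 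The same objection applies to the initial-word axiom if it is built from fresh variables $x_a$, $z$: its frame-validity is insensitive to which word it is supposed to encode. Your remark that ``the frame is tuned so that $h$ attains its designated value only along valuations that mirror a genuine halting trace'' does not help, because a frame does not constrain valuations.

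What Zolin (and this paper, in the $\{\to\}$ case) do instead is twofold. First, the ``halt trigger'' and the word-codes are not fresh variables but specific formulas in a \emph{single} variable $x^0$, carefully chosen so that distinct codes are pairwise non-unifiable and so that a marker formula $\rhd$ is not unifiable with anything of the form $\rhd \to C$ or $(\rhd \to C)\to D$ (Lemmas~\ref{L:AlphabeticFormulas} and~\ref{L:Triangle} here). Second, the converse direction is proved \emph{syntactically}, not semantically: one shows by induction on derivation height that, until a halt-code first appears, every formula derivable from $P_T$ is either a substitution instance of a simulation axiom or a code of a word actually reachable from $w_0$ under $T$ (Lemma~\ref{L:FormOfDerivations} and Corollary~\ref{C:Production}). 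The non-unifiability lemmas are exactly what controls the adversarial substitutions you are worried about, and they replace your hoped-for Kripke model entirely. If you want to salvage a model-theoretic argument you would at minimum have to replace the variable $h$ by a fixed formula whose semantics you can pin down, at which point the syntactic route is both shorter and more transparent.
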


Our main result is the following theorems.

\begin{Theorem} \label{T:main}
Fix a signature $\Sigma \supseteq \{\to\}$ and a $\Sigma$-calculus $P_0 \geq \mathbf{Int}_{\{\to\}}$, then the problems of recognizing axiomatizations and completeness for $P_0$ are undecidable.
\end{Theorem}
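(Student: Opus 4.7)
The plan is to reduce the halting problem for Post's tag systems, known to be undecidable, to both the axiomatization and completeness problems for $P_0$. Fixing a finite axiomatization $B_1, \ldots, B_k$ of $P_0$, for each tag system $T$ with input word $w$ I would effectively construct a finite set $P_{T,w}$ of $\{\to\}$-formulas (hence $\Sigma$-formulas, since $\Sigma\supseteq\{\to\}$) with two properties: (i) $P_{T,w}\leq P_0$ unconditionally, so $P_{T,w}$ is always a valid instance of the completeness problem; and (ii) $P_0\leq P_{T,w}$ if and only if $T$ halts on $w$. Together these give $P_{T,w}\sim P_0$ iff $T$ halts, settling both problems at once.

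The construction follows the tag-system strategy of Bokov~\cite{Bokov:2009} and Zolin~\cite{Zolin:2013}, but adapted to the purely implicational signature. Assign to each letter $a_i$ of $T$'s alphabet a fresh variable $x_i$, and fix a distinguished formula $q$ chosen to be a $P_0$-theorem that will serve as a ``halting indicator.'' Configurations of $T$ are encoded as implicational formulas in the $x_i$'s terminating in $q$, and the one-step transitions of $T$ are encoded by axioms $C_1, \ldots, C_m$ chosen so that modus ponens and substitution, starting from the encoded initial configuration, simulate the evolution of $T$. In addition, I include the guarded axioms $q\to B_1, \ldots, q\to B_k$, each of which is a $P_0$-theorem via axiom $(\mathrm{A}_1)$. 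A halting computation ``unlocks'' $q$, after which modus ponens on the guarded axioms yields every $B_j$, giving $P_0\leq P_{T,w}$.

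The principal obstacle---and the point where the $\{\to\}$-only setting diverges from Zolin's $\{\wedge,\to\}$ argument---is that the natural transition formulas $[a_i u]\to[u''P_i]$ are generally not intuitionistically valid, so they cannot simply be placed in $P_{T,w}$ without violating~(i). I would address this by redesigning the encoding so that $q$ and suitable auxiliary variables are inserted in positions that make each $C_\ell$ provable in $\mathbf{Int}_{\{\to\}}$ (hence a $P_0$-theorem), while still letting modus ponens faithfully track the tag-system dynamics. The completeness direction of~(ii)---that halting yields a proof of $q$ and hence of each $B_j$ from $P_{T,w}$---should then reduce to a routine induction on the length of the halting computation of $T$ on $w$.

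The hardest step, I expect, is the soundness direction of~(ii): showing that non-halting computations cannot yield a proof of $B_1$ (or any other $B_j$) from $P_{T,w}$. The natural tool is a tailored semantic model---for instance, a finite implicative algebra or a Kripke-style countermodel built from the state graph of $T$---that validates every $C_\ell$ and every guarded $q\to B_j$ but falsifies $B_1$ precisely when the computation of $T$ on $w$ does not halt. Constructing such a model so as to rule out all ``spurious'' implicational derivations that do not correspond to a genuine halting computation is the main technical work of the reduction. Once it is in place, the undecidability of halting for tag systems transfers directly to the undecidability of recognizing axiomatizations and completeness for $P_0$.
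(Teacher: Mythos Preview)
Your high-level plan matches the paper's: reduce tag-system halting to the completeness problem by building a calculus $P_{T,w}\leq P_0$ containing an encoded initial configuration, transition axioms simulating $T$, and ``guarded'' copies of the $P_0$-axioms that fire once a halting configuration is reached. Where your proposal diverges from the paper, and where it has a real gap, is in the encoding and in the soundness argument.

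\textbf{The encoding.} You propose to assign each alphabet letter $a_i$ a \emph{fresh variable} $x_i$. This will not survive the substitution rule: once $x_i$ is a variable, any substitution instance of a transition axiom can replace $x_i$ by an arbitrary formula, so an axiom intended to fire only on configurations beginning with $a_i$ will fire on configurations beginning with anything. The paper avoids this by encoding each $a_i$ as a specific $\{\to\}$-formula $\overline{a_i}$ in a single fixed variable $x^0$, chosen so that distinct letter-codes (and more generally distinct word-codes) are pairwise \emph{non-unifiable} (Lemma~\ref{L:AlphabeticFormulas}). It also prepends a sentinel formula $\rhd$ to every code, with $\rhd$ engineered so that it is unifiable neither with $\rhd\to A$ nor with $(\rhd\to B)\to C$ (Lemma~\ref{L:Triangle}); this blocks modus ponens from misfiring between two ``implication-shaped'' axioms. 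Without some device of this kind, your simulation will leak.

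\textbf{The soundness direction.} You plan to refute derivability of $B_1$ in the non-halting case via a bespoke implicative algebra or Kripke-style countermodel. This is not obviously workable: every formula in $P_{T,w}$ is, by design, a theorem of $P_0\geq\mathbf{Int}_{\{\to\}}$, so any model sound for $P_0$ validates all of $P_{T,w}$ \emph{and} $B_1$ simultaneously. You would need a non-standard matrix tailored to the particular $T$ and $w$, and you give no indication of how to build one. The paper instead argues \emph{syntactically}: it defines a derivation-height filtration $\langle P_{T,\omega,P_0}\rangle_n$ and proves (Lemma~\ref{L:FormOfDerivations}) that, up to the first height at which a short-word code could appear, every derivable formula is either a substitution instance of a transition/guard axiom or a substitution instance of a code of some word reachable from $\omega$ under $T$. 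The non-unifiability lemmas make the case analysis of a single modus ponens step go through. This purely syntactic confinement argument is the crux, and your proposal does not supply an analogue of it.
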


\section{The proof of undecidability}

In order to prove Theorem~\ref{T:main}, we shall effectively reduce the halting problem for tag systems to the problem of recognizing completeness for propositional calculi. Then, the proof of Theorem~\ref{T:main} is immediate from the undecidability of the halting problem~\cite{Minsky:61}.

More precisely, we fix any signature $\Sigma$ such that $\{ \to \} \subseteq \Sigma$ and any $\Sigma$-calculus $P_0 \geq \mathbf{Int}_{\{\to\}}$. For a given tag system $T$ and a word $\omega$, we will construct a $\Sigma$-calculus $P = P_{T,\omega,P_0}$ such that $P \leq P_0$ and $T$ halts on the input word $\omega$ iff $P_0 \leq P$.

First let us recall the notion of a tag system introduced by Post~\cite{Post:43}.

\subsection{Tag systems}

Let $\mathcal{A}$ be a finite alphabet of letters $a_1, \dots, a_m$. By $\mathcal{A}^*$ denote the set of all words over $\mathcal{A}$, including the empty word. For $\alpha \in \mathcal{A}^*$, denote by $|\alpha|$ the length of the word $\alpha$.

\begin{Definition}[Post,~\cite{Post:43}]
A \emph{tag system} is a triple $T = \langle \mathcal{A}, \mathcal{W}, d \rangle$, where $\mathcal{A} = \{ a_1, \dots, a_m \}$ is a finite alphabet of $m$ symbols, $\mathcal{W} = \{ \omega_1, \dots, \omega_m\} \subseteq \mathcal{A}^*$ is a set of $m$ words, and $d \in \mathbb{N}$ is a \emph{deletion number}. Each words $\omega_i$ is associated to the letters $a_i$: $a_1 \to \omega_1, \dots, a_m \to \omega_m$.
\end{Definition}

We say that $T$ is applicable to a word $\alpha \in \mathcal{A}^*$ if $|\alpha| \geq d$. The application of $T$ to a word $\alpha \in \mathcal{A}^*$ is defined as follows. Examine the first letter of the word $\alpha$. If it is $a_i$ then
\begin{enumerate}
  \item remove the first $d$ letters from $\alpha$, and
  \item append to its end the word $\omega_i$.
\end{enumerate}
Perform the same operation on the resulting word, and repeat the process so long as the resulting word has $d$ or more letters. To be precise, if $\alpha = a_i \beta \gamma$, $|\beta| = d-1$, and $\gamma \in \mathcal{A}^*$, then $T$ produces the word $\gamma \omega_i$ from the word $a_i \beta \gamma$. Denote this production by $a_i \beta \gamma \stackrel{T}{\longmapsto} \gamma \omega_i$. We write $\alpha \stackrel{T}{\Longmapsto} \beta$ if there are words $\gamma_1, \dots, \gamma_n$, $n \geq 1$, such that $\alpha = \gamma_1$, $\beta = \gamma_n$, and $\gamma_i \stackrel{T}{\longmapsto} \gamma_{i+1}$ for all $1 \leq i \leq n-1$.

Define the halting problem of tag systems. We say that a tag system $T$ \emph{halts} on a word $\alpha \in \mathcal{A}^*$ if there exists a word $\beta \in \mathcal{A}^*$ such that $\alpha \stackrel{T}{\Longmapsto} \beta$ and $T$ is not applicable to $\beta$, i.e. $|\beta| < d$. The \emph{halting problem} for a fixed tag system $T$ is, given any word $\alpha \in \mathcal{A}^*$, to determine whether $T$ halts on $\alpha$.

\begin{Theorem}[Minsky,~\cite{Minsky:61}] \label{T:Minsky}
There is a tag system $T$ for which the halting problem is undecidable.
\end{Theorem}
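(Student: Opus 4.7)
My plan is to obtain Theorem~\ref{T:Minsky} as a reduction from some Turing-complete model whose halting problem is already known to be undecidable. The cleanest choice, following the spirit of Minsky's original argument, is to reduce from the halting problem for two-counter (Minsky) machines: such machines have only two registers holding non-negative integers together with instructions ``increment $R_j$'', ``decrement $R_j$'', and ``jump on $R_j = 0$'', and it is a classical fact (proved by Minsky as the main stepping-stone to Theorem~\ref{T:Minsky}) that two-counter machines are Turing-complete. If I can show that every two-counter machine $M$ can be simulated by a tag system $T_M$ with deletion number $d=2$, in such a way that $M$ halts on a given input iff $T_M$ halts on the corresponding initial word, then the undecidability transfers.

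The simulation is built around the following encoding. A configuration of $M$ with register values $r_1, r_2$ and current state $q$ is encoded as a word of the form
\[
A_q\, (b_1 c_1)^{r_1} (b_2 c_2)^{r_2},
\]
where the $A_q$, $b_j$, $c_j$ are distinct letters of the tag alphabet chosen so that the production rules attached to each letter can (i) erase the $A_q$ header and shift the body by one or two letters (since $d=2$), (ii) reintroduce a new state header $A_{q'}$ at the end via the appended word $\omega_{A_q}$, and (iii) simulate arithmetic on $r_j$ by halving $r_j$ during the shift and doubling it back through the appended suffix. This ``halve-and-double'' mechanism is exactly what makes $d=2$ tag systems universal: increments and decrements of $r_j$ become insertion or deletion of a single $b_j c_j$ block, and the zero-test is simulated by branching on which header letter surfaces first once the counter has been exhausted.

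The primary obstacle is bookkeeping rather than conceptual: tag systems only inspect the leading letter and append to the right, so the simulation must be laid out so that the ``active'' state information always migrates to the front of the word at exactly the right moment, and so that registers stored on the tape are not accidentally consumed before the simulated instruction has had a chance to fire. I would spend the bulk of the work writing out the production table for each instruction class (increment, decrement, conditional jump), then prove by induction on the number of simulated instructions that the tag system word at the appropriate ``synchronization'' times is exactly the encoding of the corresponding $M$-configuration, and in particular that $T_M$ halts on the encoded input iff $M$ halts on that input. Combined with the undecidability of the halting problem for two-counter machines, this yields Theorem~\ref{T:Minsky}.
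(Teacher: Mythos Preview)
The paper does not prove Theorem~\ref{T:Minsky} at all: it is quoted as a classical result of Minsky~\cite{Minsky:61} and used as a black box in the reduction that follows (the paper only adds the remark that Wang sharpened it to $d=2$ and $1\le|\omega_i|\le 3$). So there is no ``paper's own proof'' to compare your proposal against; any reasonable pointer to the literature would have sufficed here.

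That said, your sketch is in the right spirit but the concrete encoding you wrote down would not survive a careful write-up. With deletion number $d=2$, a tag system reads only the first letter and deletes the first two, so from the word $A_q(b_1c_1)^{r_1}(b_2c_2)^{r_2}$ it cannot directly branch on whether $r_1=0$: the letter it reads is $A_q$ in either case, and after deletion the residue begins with $c_1$ or with $c_2$, information that only becomes visible one step later, by which time you have already committed to an appended suffix. Minsky's actual construction avoids this by storing the pair $(r_1,r_2)$ in a single exponent (via a G\"odel-style $2^{r_1}3^{r_2}$ coding) and using several auxiliary alphabets and passes to implement parity tests; the ``halve-and-double'' idea you mention is the right intuition, but it operates on that single exponent, not on two separate unary blocks. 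If you intend to actually carry out the proof rather than cite it, you would need to replace your encoding by one along those lines.
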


Moreover, Wang~\cite{Wang:63} showed that this holds even for some tag system $T$ with $d = 2$ and $1 \leq |\omega_i| \leq 3$ for all $1 \leq i \leq m$. For this reason, throughout the paper we will assume that all words $\omega_i$ are nonempty.

\subsection{Encoding of letters and words}

Let $\mathcal{A}$ be a finite set $\{a_1, \dots, a_m\}$. The set of all nonempty words over $\mathcal{A}$ is denoted by $\mathcal{A}^+$. We encode letters and words on $\mathcal{A}$ as $\{\to\}$-formulas.

Fix a variable $x^0$ not occurring in $P_0$. Then the code of the letter $a_i \in \mathcal{A}$, for $1 \leq i \leq m$, is a formula
\begin{equation*}
  \overline{a_i} := ( (x^0 \to \underbrace{x^0 ) \to \dots \to x^0 )}_i \to ( x^0 \to ( x^0 \to x^0 ) ).
\end{equation*}
It is easily shown that $\mathbf{Int}_{\{\to\}} \vdash B \to A$ whenever $\mathbf{Int}_{\{\to\}} \vdash A$. Since $x^0 \to ( x^0 \to x^0 )$ is a substitution instance of the axiom $\mathrm{A}_1$, we have the following lemma.

\begin{Lemma} \label{L:DerivabilityOfCodeOfLetter}
$\mathbf{Int}_{\{\to\}} \vdash \overline{a}$, for every letter $a \in \mathcal{A}$.
\end{Lemma}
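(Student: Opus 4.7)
The statement is essentially immediate once the two facts highlighted just before it are combined, so the plan is to record a short derivation rather than develop new machinery. The goal is to show that for every $a \in \mathcal{A}$, the formula $\overline{a}$, which has the shape $B \to (x^0 \to (x^0 \to x^0))$ for a certain left-associated implication chain $B$ built from $x^0$'s, is derivable in $\mathbf{Int}_{\{\to\}}$.

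The first step is to derive the consequent. Axiom $\mathrm{A}_1$ is $x \to (y \to x)$; applying the substitution $\sigma$ with $\sigma(x) = x^0$ and $\sigma(y) = x^0$ yields
\begin{equation*}
  \mathbf{Int}_{\{\to\}} \vdash x^0 \to (x^0 \to x^0),
\end{equation*}
so the consequent of $\overline{a}$ is itself a theorem.

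The second step is to prepend the antecedent. Using the meta-property recalled in the excerpt, namely that $\mathbf{Int}_{\{\to\}} \vdash A$ implies $\mathbf{Int}_{\{\to\}} \vdash B \to A$ for any formula $B$, I instantiate $A := x^0 \to (x^0 \to x^0)$ and $B$ to be exactly the left-associated chain $((x^0 \to x^0) \to \dots \to x^0)$ appearing in the definition of $\overline{a}$. This immediately produces $\mathbf{Int}_{\{\to\}} \vdash \overline{a}$. Since the choice of $B$ was arbitrary, the derivation is uniform in $a \in \mathcal{A}$ and the lemma follows.

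There is essentially no obstacle: the only thing worth double-checking is that the meta-property $\vdash A \Rightarrow \vdash B \to A$ is invoked with the right $B$, namely the whole antecedent of $\overline{a}$, so that the resulting formula literally matches the definition of $\overline{a}$ symbol for symbol. For completeness one may note that this meta-property is itself a one-line consequence of $\mathrm{A}_1$ and modus ponens: from $\vdash A$ and the substitution instance $A \to (B \to A)$ of $\mathrm{A}_1$, modus ponens gives $\vdash B \to A$.
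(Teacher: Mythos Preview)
Your proof is correct and follows exactly the approach indicated in the paper: derive the consequent $x^0 \to (x^0 \to x^0)$ as a substitution instance of $\mathrm{A}_1$, then invoke the meta-property $\vdash A \Rightarrow \vdash B \to A$ to prepend the antecedent of $\overline{a}$. The paper treats the lemma as immediate from these two observations without a separate proof environment, and your write-up simply makes the details explicit.
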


Now we introduce the following notation. Let $x \vee y$ be an abbreviation for the following formula:
\begin{equation*}
  (x \to y) \to y.
\end{equation*}
For a word $\alpha = a_{i_1} \dots a_{i_k} \in \mathcal{A}^+$, we write $\overrightarrow{\alpha}$ as a shortcut for the formula
\begin{equation*}
  \overline{a_{i_1}} \vee \left( \overline{a_{i_2}} \vee \dots \vee \left(\overline{a_{i_{k-1}}} \vee \overline{a_{i_k}} \right)\right),
\end{equation*}
and $\overleftarrow{\alpha}$ as a shortcut for the formula
\begin{equation*}
  \left( \left ( \overline{a_{i_1}} \vee \overline{a_{i_2}} \right) \vee \dots \vee \overline{a_{i_{k-1}}} \right) \vee \overline{a_{i_k}}.
\end{equation*}
The notation can be extended to the alphabet $\mathcal{A} \cup \mathcal{V}$, where $\mathcal{V}$ is the infinite set of propositional variables defined above. For example, $\overrightarrow{a x b y} = \overline{a} \vee \left(x \vee \left(\overline{b} \vee y\right)\right)$, where $a, b \in \mathcal{A}$ and $x, y \in \mathcal{V}$.

\begin{Lemma} \label{L:DerivabilityOfDisjunction}
In $\mathbf{Int}_{\{\to\}}$ the following derivations hold:
\begin{align*}
  \mathbf{Int}_{\{\to\}} & \vdash x \to x \vee y, \\
  \mathbf{Int}_{\{\to\}} & \vdash y \to x \vee y.
\end{align*}
\end{Lemma}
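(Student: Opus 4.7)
The plan is to unfold the abbreviation $x \vee y \equiv (x \to y) \to y$, turning the two claims into
\[
\mathbf{Int}_{\{\to\}} \vdash x \to ((x \to y) \to y) \quad\text{and}\quad \mathbf{Int}_{\{\to\}} \vdash y \to ((x \to y) \to y).
\]
The second of these is immediate: the formula $y \to ((x \to y) \to y)$ is a substitution instance of the axiom $\mathrm{A}_1 \equiv x \to (y \to x)$, obtained by substituting $y$ for $x$ and $(x \to y)$ for $y$. So the second line is derived by a single application of the substitution rule to $\mathrm{A}_1$.

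For the first derivation I would appeal to the standard deduction theorem for Hilbert-style systems over $\mathrm{A}_1$ and $\mathrm{A}_2$ with modus ponens, which holds for $\mathbf{Int}_{\{\to\}}$. By the deduction theorem applied twice, it suffices to exhibit $x,\, (x \to y) \vdash y$, and this is a single step of modus ponens. Alternatively, and using only the primitive apparatus (axioms $\mathrm{A}_1, \mathrm{A}_2$, modus ponens, substitution), one can argue directly: first derive $(x \to y) \to (x \to y)$ via the usual five-line proof of $A \to A$ from $\mathrm{A}_1$ and $\mathrm{A}_2$ with $A := x \to y$, and then apply the commutation theorem $(P \to (Q \to R)) \to (Q \to (P \to R))$—itself derivable from $\mathrm{A}_1, \mathrm{A}_2$ alone—with $P := (x \to y)$, $Q := x$, $R := y$. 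One further modus ponens then yields $x \to ((x \to y) \to y)$, as required.

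The main obstacle is essentially nil: both claims are textbook exercises in pure implicational logic, and their only genuinely nontrivial ingredient is the standard internalization of modus ponens afforded by the deduction theorem (or, in the alternative route, the identity and commutation lemmas for $\mathrm{A}_1, \mathrm{A}_2$). Neither ingredient exceeds the resources already implicit in the preceding remark that $\mathbf{Int}_{\{\to\}} \vdash B \to A$ whenever $\mathbf{Int}_{\{\to\}} \vdash A$.
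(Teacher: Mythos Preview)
Your proposal is correct and matches the paper's own proof essentially line for line: the paper also observes that $y \to x \vee y$ is a substitution instance of $\mathrm{A}_1$, and derives $x \to x \vee y$ from $x,\, x \to y \vdash y$ via the deduction theorem. Your additional alternative route (identity plus commutation) is not in the paper, but the primary argument is the same.
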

\begin{proof}
The formula $y \to x \vee y$ is the substitution instance of the axiom $\mathrm{A}_1$. Since
\begin{equation*}
  x, x \to y \vdash y,
\end{equation*}
we have $\mathbf{Int}_{\{\to\}} \vdash x \to x \vee y$ by the deduction theorem.
\end{proof}

\begin{Definition} (Zolin,~\cite{Zolin:2013}) \label{D:AlphabeticFormula}
An \emph{alphabetic formula} over the alphabet $\mathcal{A}$, or an $\mathcal{A}$-\emph{formula} for short, is an arbitrary $\{\vee\}$-formula over the codes of letters from $\mathcal{A}$. Formally, $\overline{a}$ is a $\mathcal{A}$-formula for each letter $a \in \mathcal{A}$, and if $A$, $B$ are $\mathcal{A}$-formulas then so is $A \vee B$.
\end{Definition}

In particular, $\overrightarrow{\alpha}$ and $\overleftarrow{\alpha}$ are $\mathcal{A}$-formulas. Lemma~\ref{L:DerivabilityOfCodeOfLetter} and Lemma~\ref{L:DerivabilityOfDisjunction} imply:

\begin{Lemma} \label{L:DerivabilityOfAFormulas}
$\mathbf{Int}_{\{\to\}} \vdash A$, for every $\mathcal{A}$-formula $A$.
\end{Lemma}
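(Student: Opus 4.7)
The plan is to proceed by structural induction on the $\mathcal{A}$-formula $A$, following the recursive definition given in Definition~\ref{D:AlphabeticFormula}.

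For the base case, $A$ has the form $\overline{a}$ for some $a \in \mathcal{A}$, and derivability in $\mathbf{Int}_{\{\to\}}$ is immediate from Lemma~\ref{L:DerivabilityOfCodeOfLetter}.

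For the inductive step, suppose $A = B \vee C$ where $B, C$ are $\mathcal{A}$-formulas. By the induction hypothesis, $\mathbf{Int}_{\{\to\}} \vdash B$. From Lemma~\ref{L:DerivabilityOfDisjunction}, we have $\mathbf{Int}_{\{\to\}} \vdash x \to x \vee y$; applying the substitution rule with $x \mapsto B$ and $y \mapsto C$ gives $\mathbf{Int}_{\{\to\}} \vdash B \to B \vee C$. A single application of modus ponens with the derivable $B$ then yields $\mathbf{Int}_{\{\to\}} \vdash B \vee C$, closing the induction.

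There is no real obstacle here; the statement is essentially a routine bookkeeping consequence of the two lemmas that precede it, and the only thing to check is that the abbreviation $B \vee C := (B \to C) \to C$ is compatible with uniform substitution into Lemma~\ref{L:DerivabilityOfDisjunction}, which it is because the abbreviation is defined schematically for arbitrary formulas in the two argument positions.
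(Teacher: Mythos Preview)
Your proof is correct and is precisely the routine structural induction the paper has in mind when it says the lemma follows from Lemma~\ref{L:DerivabilityOfCodeOfLetter} and Lemma~\ref{L:DerivabilityOfDisjunction}; you have simply made the implicit argument explicit. The only minor remark is that the induction hypothesis for $C$ is never used, but this is harmless.
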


Given a formula $A$, denote by $A^*$ the set of all substitution instances of $A$. Similarly, given a set $M$ of formulas, denote by $M^*$ the set
\begin{equation*}
  M^* := \bigcup_{A \in M} A^*.
\end{equation*}
In accordance with~\cite{Zolin:2013} let us call two formulas $A$ and $B$ \emph{unifiable} if $A^* \cap B^* \neq \emptyset$.

\begin{Lemma} \label{L:AlphabeticFormulas}
No two distinct $\mathcal{A}$-formulas are unifiable.
\end{Lemma}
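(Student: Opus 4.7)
The plan is to prove the contrapositive by induction on the total size $|A|+|B|$: whenever $\sigma A = \tau B$ for $\mathcal{A}$-formulas $A, B$, we must have $A = B$. Since the only propositional variable occurring in any $\mathcal{A}$-formula is $x^0$, only the images $\sigma(x^0) = C$ and $\tau(x^0) = D$ are relevant, and the hypothesis reduces to $A[x^0/C] = B[x^0/D]$ for some formulas $C, D$. The proof hinges on the structural observation that every $\mathcal{A}$-formula has $\to$ as its outermost connective: a leaf is $\overline{a_i} = F_i \to (x^0 \to (x^0 \to x^0))$, writing $F_i$ for the left-associated chain of $\to$'s constituting its antecedent, and a composite is $A_1 \vee A_2 = (A_1 \to A_2) \to A_2$. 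Hence the outer implications of $A[x^0/C]$ and $B[x^0/D]$ must match, allowing me to equate their antecedents and their conclusions.

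I split into three cases according to whether $A$ and $B$ are leaves or $\vee$-formulas. If both are leaves $\overline{a_i}$ and $\overline{a_j}$, equating conclusions yields $C \to (C \to C) = D \to (D \to D)$, which forces $C = D$; the remaining equation $F_i[x^0/C] = F_j[x^0/C]$ then gives $i = j$ by a short sub-induction using the recursion $F_i = F_{i-1} \to x^0$ (for $i \geq 2$, with $F_1 = x^0 \to x^0$), together with the fact that no finite formula is a proper subformula of itself. If both are $\vee$-formulas, $A = A_1 \vee A_2$ and $B = B_1 \vee B_2$, matching the outer and inner pieces of $(A_1 \to A_2) \to A_2$ and $(B_1 \to B_2) \to B_2$ yields $A_k[x^0/C] = B_k[x^0/D]$ for $k = 1, 2$, and the inductive hypothesis gives $A_k = B_k$, hence $A = B$.

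The delicate case is the mixed one, say $A = \overline{a_i}$ and $B = B_1 \vee B_2$, where equating conclusions forces $B_2[x^0/D] = C \to (C \to C)$. I plan to show this equation admits no $\mathcal{A}$-formula solution $B_2$. If $B_2 = \overline{a_j}$ is a leaf, then matching against $F_j[x^0/D] \to (D \to (D \to D))$ yields $C = D$ and $C = D \to D$, so $C = C \to C$, contradicting the subformula length bound. If $B_2 = B_{2,1} \vee B_{2,2}$, then matching against $(B_{2,1}[x^0/D] \to B_{2,2}[x^0/D]) \to B_{2,2}[x^0/D]$ gives $B_{2,2}[x^0/D] = C \to C$ together with $C$ being an implication whose conclusion is this same $C \to C$, so $C \to C$ is a proper subformula of $C$, again impossible by counting symbols.

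This mixed-case analysis is the main obstacle, since it requires a careful argument that the very specific shape $C \to (C \to C)$ cannot be a substitution instance of any $\vee$-typed $\mathcal{A}$-formula. Once dispatched, the three cases close the induction cleanly and establish the lemma.
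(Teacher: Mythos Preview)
Your proof is correct and follows essentially the same structural induction and leaf/composite case split as the paper's proof. Your treatment is more explicit than the paper's: in the mixed case the paper simply observes that $\overline{a_i}$ is an instance of $(y \to x^0) \to (x^0 \to (x^0 \to x^0))$ while any $B_1 \vee B_2$ is an instance of $(u \to v) \to v$, and invokes the occur-check on $x^0$ versus $x^0 \to (x^0 \to x^0)$, whereas you unwind this into a direct sub-case analysis on $B_2$; both arguments amount to the same size/occur-check contradiction.
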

\begin{proof}
By induction on the definition of an $\mathcal{A}$-formula $A$.

Let $A$ be the code of a letter $a_i \in \mathcal{A}$. If $B$ is the code of a letter $a_j \in \mathcal{A}$, then $i \neq j$. Without loss of generality, $i < j$. Denote by $C$ the following formula
\begin{equation*}
  ((y \to \underbrace{x^0) \to \dots \to x^0)}_i \to (x^0 \to (x^0 \to x^0)).
\end{equation*}
Since $\overline{a_i}$ is the substitution instance of $C$ with respect to replacing the propositional variable $y$ by $x^0$ and $\overline{a_j}$ is the substitution instance of $C$ with respect to replacing the propositional variable $y$ by
\begin{equation*}
  ((x^0 \to \underbrace{x^0) \to \dots \to x^0)}_{j-i},
\end{equation*}
we conclude that $A$ and $B$ are not unifiable.

If $B$ is a formula $B_1 \vee B_2$ for some $\mathcal{A}$-formulas $B_1$ and $B_2$, then $A = \overline{a_i}$ is a substitution instance of
\begin{equation*}
(y \to x^0) \to (x^0 \to (x^0 \to x^0))
\end{equation*}
and $B$ is the substitution instance of $(u \to v) \to v$. Since the formulas $x^0$ and $x^0 \to (x^0 \to x^0)$ are not unifiable, we see that $A$ and $B$ are not unifiable either.

Now let $A = A_1 \vee A_2$ for some $\mathcal{A}$-formulas $A_1$ and $A_2$, so it can be assumed that $B = B_1 \vee B_2$ for some $\mathcal{A}$-formulas $B_1$ and $B_2$. If $A$, $B$ are unifiable, then also $A_1$, $B_1$ and $A_2$, $B_2$ are unifiable. By induction hypothesis, $A_1 = B_1$ and $A_2 = B_2$. Hence, $A = B$.

This completes the proof of the lemma.
\end{proof}

Denote by $\rhd$ the following formula
\begin{equation*}
((x^0 \to x^0) \to x^0) \to x^0.
\end{equation*}
Since formulas $x \to x$ and $(y \to z) \to z$ are not unifiable, we obtain the following lemma.

\begin{Lemma} \label{L:Triangle}
Formulas $\rhd$, $\rhd \to A$ are not unifiable for any formula $A$, and formulas $\rhd$, $(\rhd \to B) \to C$ are also not unifiable for any formulas $B$, $C$.
\end{Lemma}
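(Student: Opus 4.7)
The plan is to refute each unifiability claim by decomposing the putative common instance at the top-level implication, reducing the question in both cases to the fact (noted immediately before the lemma) that $x \to x$ and $(y \to z) \to z$ share no common substitution instance.

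For the first claim, I would suppose for contradiction that substitutions $\sigma, \tau$ satisfy $\sigma \rhd = \tau(\rhd \to A)$. Writing $p := \sigma x^0$ and $q := \tau x^0$, the left-hand side is $((p \to p) \to p) \to p$ while the right-hand side is $\tau\rhd \to \tau A = (((q \to q) \to q) \to q) \to \tau A$. Equating the top-level antecedents yields $(p \to p) \to p = ((q \to q) \to q) \to q$; one further decomposition at the antecedent position gives $p \to p = (q \to q) \to q$. Now $p \to p$ is a substitution instance of $x \to x$ and $(q \to q) \to q$ is a substitution instance of $(y \to z) \to z$ (via $y \mapsto q$, $z \mapsto q$), so this would make $x \to x$ and $(y \to z) \to z$ unifiable, contradicting the observation stated just before the lemma.

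The second claim is handled in the same spirit. Suppose $\sigma \rhd = \tau((\rhd \to B) \to C)$. The top-level antecedent of the right-hand side is $\tau(\rhd \to B) = (((q \to q) \to q) \to q) \to \tau B$, so comparing antecedents gives $(p \to p) \to p = (((q \to q) \to q) \to q) \to \tau B$. Decomposing once more at the antecedent position yields $p \to p = ((q \to q) \to q) \to q$, which is again a substitution instance of $x \to x$ set equal to a substitution instance of $(y \to z) \to z$ (here $y \mapsto q \to q$, $z \mapsto q$); the same contradiction applies.

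I do not expect any real obstacle: the argument is just routine bookkeeping on the top-level shape of $\sigma\rhd$ and on the antecedent of the other formula under $\tau$, and both parts collapse to the single non-unifiability fact recorded before the lemma. I would present them in parallel and close with one appeal to that observation.
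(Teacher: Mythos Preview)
Your proposal is correct and follows exactly the route the paper intends: the paper's proof is the single sentence preceding the lemma (``Since formulas $x\to x$ and $(y\to z)\to z$ are not unifiable, we obtain the following lemma''), and you have simply spelled out the two top-level decompositions that reduce each claim to that fact. There is nothing to add; your write-up is just a more explicit version of the paper's one-line justification.
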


Next, we define the code of a word $\alpha \in \mathcal{A}^+$ as the finite set $\mathsf{Code}(\alpha)$ consisting of all tautologies of the following four types:

\bigskip

\begin{tabular}{lll}
  Type 0 & $\rhd \to \overrightarrow{\alpha}$ &  \\
  Type 1 & $\rhd \to \overrightarrow{\alpha_1} \vee \overrightarrow{\alpha_2}$ & $\alpha = \alpha_1 \alpha_2$, $|\alpha_1| \geq 2$, $|\alpha_2| \geq 1$; \\
  Type 2 & $\rhd \to (\overleftarrow{\alpha_1} \vee \overrightarrow{\alpha_2}) \vee \overrightarrow{\alpha_3}$ & $\alpha = \alpha_1 \alpha_2 \alpha_3$, $|\alpha_1| \geq 2$, $|\alpha_2| \geq 2$, $|\alpha_3| \geq 1$; \\
  Type 3 & $\rhd \to \overleftarrow{\alpha_1} \vee \overrightarrow{\alpha_2}$ & $\alpha = \alpha_1 \alpha_2$, $|\alpha_1| \geq 3$, $|\alpha_2| \geq 1$. \\
\end{tabular}

\bigskip

\noindent Furthermore, we will call each formula of $\mathsf{Code}(\alpha)$ as the code of same word $\alpha$. The code of type $0$ is said to be \emph{canonical}.

\subsection{Construction of the calculus $P_{T,\omega,P_0}$}

Let $T = \langle \mathcal{A}, \mathcal{W}, d \rangle$ be a tag system, $\omega$ a nonempty word over $\mathcal{A}$, and $P_0$ a $\Sigma$-calculus. Recall that $\mathcal{A} = \{ a_1, \dots, a_m \}$, $\mathcal{W} = \{ \omega_1, \dots, \omega_m\}$, and all $\omega_i$ are assumed to be nonempty. Denote by $P_{T,\omega,P_0}$ a $\Sigma$-calculus with axioms:

\bigskip

\noindent
\begin{tabular}{llll}
  $(\mathrm{W}_{\omega})$   & $\rhd \to \overrightarrow{\omega}$, &  \\
  $(\mathrm{T}_1)$ & $(\rhd \to \overrightarrow{a_i \alpha y\ }) \to (\rhd \to \overrightarrow{y \omega_i})$, & \multicolumn{2}{l}{for all $\alpha \in \mathcal{A}^*$, $|\alpha| = d-1$, $1 \leq i \leq m$,} \\
  $(\mathrm{T}_2)$ & $(\rhd \to \overrightarrow{a_i \alpha\ }) \to (\rhd \to \overrightarrow{\omega_i})$, & \multicolumn{2}{l}{for all $\alpha \in \mathcal{A}^*$, $|\alpha| = d-1$, $1 \leq i \leq m$,} \\
  $(\mathrm{H})$ & $(\rhd \to \overrightarrow{\alpha}) \to A$, & \multicolumn{2}{l}{for all $\alpha \in \mathcal{A}^*$, $0 < |\alpha| < d$, $A \in P_0$,} \\
  $(\mathrm{R}_1)$ & \multicolumn{2}{l}{$(\rhd \to (y \vee \overrightarrow{az}) \vee u) \to (\rhd \to (\overleftarrow{ya} \vee z) \vee u)$,} & for all $a \in \mathcal{A}$, \\
  $(\mathrm{R}_2)$ & \multicolumn{2}{l}{$(\rhd \to \overleftarrow{ya} \vee z) \to (\rhd \to y \vee \overrightarrow{az})$,} & for all $a \in \mathcal{A}$. \\
\end{tabular}

\bigskip

Let $P_{T}$ be the subsystem of $P_{T,\omega,P_0}$ consisting of axioms $\mathrm{T}_1$, $\mathrm{T}_2$, $\mathrm{R}_1$, $\mathrm{R}_2$ and $P_{T, \omega} = P_T \cup \{\mathrm{W}_{\omega}\}$. Now we prove some properties of the calculus $P_{T,\omega,P_0}$.

\begin{Lemma}
$P_{T,\omega} \leq \mathbf{Int}_{\{\to\}}$.
\end{Lemma}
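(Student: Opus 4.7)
My plan is to show that each axiom of $P_{T,\omega}$ --- namely $\mathrm{W}_{\omega}$, $\mathrm{T}_1$, $\mathrm{T}_2$, $\mathrm{R}_1$, $\mathrm{R}_2$ --- is itself a theorem of $\mathbf{Int}_{\{\to\}}$. Since $\mathbf{Int}_{\{\to\}}$ is closed under modus ponens and substitution, this yields $[P_{T,\omega}] \subseteq [\mathbf{Int}_{\{\to\}}]$, which is exactly $P_{T,\omega} \leq \mathbf{Int}_{\{\to\}}$. The key shortcut, stated explicitly just before Lemma~\ref{L:DerivabilityOfCodeOfLetter}, is that an instance of $\mathrm{A}_1$ together with modus ponens turns any theorem $A$ of $\mathbf{Int}_{\{\to\}}$ into a theorem $B \to A$, for any $B$. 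Every axiom of $P_{T,\omega}$ has the shape $\rhd \to C$ or $B \to (\rhd \to C)$, so it suffices to show that the innermost formula $C$ is derivable in $\mathbf{Int}_{\{\to\}}$.

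For $\mathrm{W}_{\omega}$ and $\mathrm{T}_2$ this is immediate: the formulas $\overrightarrow{\omega}$ and $\overrightarrow{\omega_i}$ are $\mathcal{A}$-formulas, hence derivable by Lemma~\ref{L:DerivabilityOfAFormulas}. For $\mathrm{T}_1$ the word $y\omega_i$ begins with the variable $y$, so (using that $\omega_i$ is nonempty) the formula $C = \overrightarrow{y\omega_i}$ unfolds to $y \vee \overrightarrow{\omega_i}$; since $\overrightarrow{\omega_i}$ is an $\mathcal{A}$-formula and hence derivable, Lemma~\ref{L:DerivabilityOfDisjunction} applied in the form $\overrightarrow{\omega_i} \to (y \vee \overrightarrow{\omega_i})$ plus modus ponens produces $C$. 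For $\mathrm{R}_1$ and $\mathrm{R}_2$, unfolding the shortcuts $\overleftarrow{ya}$ and $\overrightarrow{az}$ turns the required $C$ into an iterated $\vee$-expression whose only non-variable leaf is the letter-code $\overline{a}$; starting from the derivable $\overline{a}$ (Lemma~\ref{L:DerivabilityOfCodeOfLetter}) and iteratively applying the two halves of Lemma~\ref{L:DerivabilityOfDisjunction}, one introduces the remaining variables in turn and obtains $C$.

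I do not expect any real obstacle: the lemma amounts to verifying that the axioms of $P_{T,\omega}$ were designed to be intuitionistically innocuous, with the $\rhd \to$-prefix acting as a discard slot absorbed by $\mathrm{A}_1$. This also makes transparent why the axiom scheme $\mathrm{H}$, which imports the axioms of $P_0$ on the right-hand side, is deliberately excluded from $P_{T,\omega}$: $\mathrm{H}$ is the only axiom family that can push the derivable theorems strictly above $\mathbf{Int}_{\{\to\}}$ when $P_0 > \mathbf{Int}_{\{\to\}}$.
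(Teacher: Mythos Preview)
Your proposal is correct and follows exactly the approach the paper intends: the paper's own proof is the one-liner ``Easily follows from Lemmas~\ref{L:DerivabilityOfCodeOfLetter},~\ref{L:DerivabilityOfDisjunction} and~\ref{L:DerivabilityOfAFormulas},'' and you have simply spelled out how those three lemmas, together with the $A \Rightarrow B \to A$ observation preceding Lemma~\ref{L:DerivabilityOfCodeOfLetter}, handle each axiom of $P_{T,\omega}$ in turn.
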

\begin{proof}
Easily follows from Lemmas~\ref{L:DerivabilityOfCodeOfLetter},~\ref{L:DerivabilityOfDisjunction} and~\ref{L:DerivabilityOfAFormulas}.
\end{proof}

\begin{Corollary} \label{C:Inclusion}
$P_{T,\omega,P_0} \leq P_0$.
\end{Corollary}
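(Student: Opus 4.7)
The plan is to show that every axiom of $P_{T,\omega,P_0}$ is derivable in $P_0$; then, since $[P_0]$ is closed under modus ponens and substitution, all theorems of $P_{T,\omega,P_0}$ will lie in $[P_0]$, which is exactly $P_{T,\omega,P_0} \leq P_0$.

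First I would dispatch the axioms $\mathrm{W}_\omega$, $\mathrm{T}_1$, $\mathrm{T}_2$, $\mathrm{R}_1$, $\mathrm{R}_2$ collectively. These are precisely the axioms of $P_{T,\omega}$. By the preceding lemma, $P_{T,\omega} \leq \mathbf{Int}_{\{\to\}}$, so each of these axioms is a theorem of $\mathbf{Int}_{\{\to\}}$. The hypothesis $P_0 \geq \mathbf{Int}_{\{\to\}}$ then gives $[\mathbf{Int}_{\{\to\}}] \subseteq [P_0]$, so all these axioms are already in $[P_0]$.

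The only remaining case is axiom scheme $\mathrm{H}$: formulas of the form $(\rhd \to \overrightarrow{\alpha}) \to A$ with $A \in P_0$. Here I would use that $A$, being an axiom of $P_0$, is trivially in $[P_0]$, and that the intuitionistic axiom $\mathrm{A}_1\colon x \to (y \to x)$ is in $[\mathbf{Int}_{\{\to\}}] \subseteq [P_0]$. Applying the substitution rule to $\mathrm{A}_1$ with $x := A$ and $y := \rhd \to \overrightarrow{\alpha}$ yields
\begin{equation*}
  A \to \bigl((\rhd \to \overrightarrow{\alpha}) \to A\bigr) \in [P_0],
\end{equation*}
and one application of modus ponens with $A$ gives $(\rhd \to \overrightarrow{\alpha}) \to A \in [P_0]$, as required.

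There is no real obstacle here — the corollary is essentially immediate from the previous lemma together with the observation that attaching an arbitrary antecedent to an $A \in P_0$ is cheap in any implicational calculus containing $\mathrm{A}_1$. The content of the construction lives in the other direction ($P_0 \leq P$ iff $T$ halts), which this corollary merely sets up by securing one of the two inclusions in the intended equivalence.
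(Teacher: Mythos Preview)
Your proof is correct and is exactly the natural argument the paper leaves implicit: the corollary is stated without proof, relying on the preceding lemma for the $P_{T,\omega}$-axioms and the trivial observation (via $\mathrm{A}_1$) that $(\rhd \to \overrightarrow{\alpha}) \to A$ is derivable in any calculus $\geq \mathbf{Int}_{\{\to\}}$ once $A$ is. There is nothing to add.
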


\subsection{Derivability of the $T$-productions}

Here we show that the calculus $P_T$ can ``simulate'' productions of the tag system $T$. At the beginning let us prove auxiliary lemmas.

\begin{Lemma}
$\mathrm{R}_1, \rhd \to (\overleftarrow{\xi} \vee \overrightarrow{\beta}) \vee \overrightarrow{\zeta} \vdash \rhd \to \overleftarrow{\xi \beta} \vee \overrightarrow{\zeta}$, for all $\xi, \beta, \zeta \in \mathcal{A}^+$.
\end{Lemma}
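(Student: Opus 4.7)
The plan is to proceed by induction on $|\beta|$, using axiom $\mathrm{R}_1$ to shift one letter at a time from the right-associated side $\overrightarrow{\beta}$ into the left-associated side $\overleftarrow{\xi}$. The key observation that motivates this is that the axiom schema
$$(\rhd \to (y \vee \overrightarrow{az}) \vee u) \to (\rhd \to (\overleftarrow{ya} \vee z) \vee u)$$
is precisely a ``move one letter from the head of the right segment to the tail of the left segment'' rule, once $y$, $z$, $u$ are instantiated by the appropriate $\{\vee\}$-formulas built from codes of letters.

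For the base case $|\beta|=1$, write $\beta = a$, so $\overrightarrow{\beta} = \overline{a}$ and, by definition of $\overleftarrow{\cdot}$, we have $\overleftarrow{\xi a} = \overleftarrow{\xi}\vee\overline{a} = \overleftarrow{\xi}\vee\overrightarrow{\beta}$; thus the claimed conclusion coincides syntactically with the hypothesis and no inference is needed.

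For the inductive step, let $|\beta|\geq 2$ and write $\beta = a\beta'$ with $a\in\mathcal A$ and $\beta'\in\mathcal A^+$, so that $\overrightarrow{\beta} = \overline{a}\vee\overrightarrow{\beta'}$. I would then apply the substitution $y := \overleftarrow{\xi}$, $z := \overrightarrow{\beta'}$, $u := \overrightarrow{\zeta}$ to the axiom $\mathrm{R}_1$ (with the same letter $a$). The antecedent of the resulting implication is exactly $\rhd \to (\overleftarrow{\xi}\vee\overrightarrow{a\beta'})\vee\overrightarrow{\zeta}$, i.e., the given hypothesis; modus ponens yields
$$\rhd \to (\overleftarrow{\xi a}\vee\overrightarrow{\beta'})\vee\overrightarrow{\zeta}.$$
Applying the induction hypothesis to the triple $(\xi a,\,\beta',\,\zeta)$ (noting $|\beta'| = |\beta|-1 < |\beta|$ and $\xi a, \beta', \zeta \in \mathcal A^+$) gives $\rhd \to \overleftarrow{(\xi a)\beta'}\vee\overrightarrow{\zeta}$, which by associativity of concatenation and the definition of $\overleftarrow{\cdot}$ equals $\rhd \to \overleftarrow{\xi\beta}\vee\overrightarrow{\zeta}$, as required.

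There is no real obstacle here; the only detail to verify carefully is the parenthesization bookkeeping, namely that $\overleftarrow{\xi a} = \overleftarrow{\xi}\vee\overline{a}$ and that the substitution into $\mathrm{R}_1$ aligns exactly with the grouping of the hypothesis. Both follow immediately from the recursive definitions of $\overrightarrow{\cdot}$ and $\overleftarrow{\cdot}$ given in the preliminaries.
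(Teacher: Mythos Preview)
Your proof is correct and follows exactly the same route as the paper's own proof: induction on $|\beta|$, with the base case $|\beta|=1$ reducing to syntactic identity and the inductive step peeling off the first letter $a$ of $\beta$ via one application of $\mathrm{R}_1$ followed by the induction hypothesis applied to $(\xi a,\beta',\zeta)$. Your write-up is in fact slightly more explicit than the paper's about the substitution into $\mathrm{R}_1$ and the parenthesization identities $\overleftarrow{\xi a}=\overleftarrow{\xi}\vee\overline{a}$ and $\overrightarrow{a\beta'}=\overline{a}\vee\overrightarrow{\beta'}$, but the argument is the same.
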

\begin{proof}
By induction on $|\beta|$. If $|\beta| = 1$, then the formulas $\rhd \to (\overleftarrow{\xi} \vee \overrightarrow{\beta}) \vee \overrightarrow{\zeta}$ and $\rhd \to \overleftarrow{\xi \beta} \vee \overrightarrow{\zeta}$ are identical.

Now let $|\beta| \geq 2$, then $\beta = a \delta$ for a letter $a \in \mathcal{A}$ and a nonempty word $\delta$. Therefore,
\begin{equation*}
  \mathrm{R}_1,~\rhd \to (\overleftarrow{\xi} \vee \overrightarrow{a \delta}) \vee \overrightarrow{\zeta}~\vdash~\rhd \to (\overleftarrow{\xi a} \vee \overrightarrow{\delta}) \vee \overrightarrow{\zeta}
\end{equation*}
by modus ponens. By induction hypothesis, we have
\begin{equation*}
  \mathrm{R}_1,~\rhd \to (\overleftarrow{\xi a} \vee \overrightarrow{\delta}) \vee \overrightarrow{\zeta}~\vdash~\rhd \to \overleftarrow{\xi \beta} \vee \overrightarrow{\zeta}.
\end{equation*}
This completes the proof of the lemma.
\end{proof}

\begin{Corollary}
$\mathrm{R}_1,~\rhd \to \overrightarrow{\xi} \vee \overrightarrow{\zeta}~\vdash~\rhd \to \overleftarrow{\xi} \vee \overrightarrow{\zeta}$, for all $\xi, \zeta \in \mathcal{A}^+$.
\end{Corollary}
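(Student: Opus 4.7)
The plan is to reduce the corollary directly to the preceding lemma by a short case analysis on $|\xi|$, using only the observation that $\overrightarrow{a} = \overleftarrow{a} = \overline{a}$ for a single letter $a \in \mathcal{A}$.

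First I would dispose of the degenerate case $|\xi| = 1$: if $\xi$ is a single letter $a$, then the right- and left-associated notations collapse and $\overrightarrow{\xi} = \overline{a} = \overleftarrow{\xi}$, so the hypothesis and the conclusion are literally the same formula and nothing needs to be derived.

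For $|\xi| \geq 2$ I would write $\xi = a\xi'$ with $a \in \mathcal{A}$ and $\xi' \in \mathcal{A}^+$. By the definition of $\overrightarrow{\cdot}$ we have $\overrightarrow{\xi} = \overline{a} \vee \overrightarrow{\xi'}$, and since $\overleftarrow{a} = \overline{a}$, this is literally $\overleftarrow{a} \vee \overrightarrow{\xi'}$. Hence the hypothesis $\rhd \to \overrightarrow{\xi} \vee \overrightarrow{\zeta}$ already has the shape $\rhd \to (\overleftarrow{a} \vee \overrightarrow{\xi'}) \vee \overrightarrow{\zeta}$ required by the preceding lemma. Applying that lemma with parameters $a$, $\xi'$, $\zeta$ yields $\rhd \to \overleftarrow{a\xi'} \vee \overrightarrow{\zeta}$, which is exactly $\rhd \to \overleftarrow{\xi} \vee \overrightarrow{\zeta}$.

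There is no genuine obstacle: the corollary is a direct specialization of the preceding lemma, and the only mildly delicate point is matching the hypothesis to the pattern $(\overleftarrow{\xi} \vee \overrightarrow{\beta}) \vee \overrightarrow{\zeta}$ by peeling off the first letter of $\xi$ and using that $\overleftarrow{a}$ and $\overline{a}$ coincide on a one-letter word.
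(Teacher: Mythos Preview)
Your argument is correct and is exactly the intended one: the paper states this as a corollary without proof, and the derivation you give---peeling off the first letter so that $\overrightarrow{\xi}\vee\overrightarrow{\zeta}$ reads as $(\overleftarrow{a}\vee\overrightarrow{\xi'})\vee\overrightarrow{\zeta}$ and then invoking the preceding lemma---is precisely how it follows. The degenerate case $|\xi|=1$ and the identification $\overleftarrow{a}=\overline{a}=\overrightarrow{a}$ are handled cleanly.
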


\begin{Lemma}
$\mathrm{R}_2,~\rhd \to \overleftarrow{\xi} \vee \overrightarrow{\zeta}~\vdash~\rhd \to \overrightarrow{\xi\zeta}$, for all $\xi, \zeta \in \mathcal{A}^+$.
\end{Lemma}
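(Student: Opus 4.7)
The plan is to prove this by induction on $|\xi|$, exactly mirroring the structure of the preceding lemma for $\mathrm{R}_1$ but working in the opposite direction: that lemma gathered a right-associated tail into a left-associated one, whereas here we must unfold a left-associated head into the purely right-associated form $\overrightarrow{\xi\zeta}$.

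For the base case $|\xi| = 1$, we have $\xi = a$ for some $a \in \mathcal{A}$, so $\overleftarrow{\xi} = \overline{a}$ and the formulas $\overleftarrow{\xi} \vee \overrightarrow{\zeta}$ and $\overrightarrow{a \zeta} = \overrightarrow{\xi\zeta}$ are literally identical, so there is nothing to do.

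For the inductive step, suppose $|\xi| \geq 2$ and write $\xi = \eta a$ with $\eta \in \mathcal{A}^+$ and $a \in \mathcal{A}$. By the definition of $\overleftarrow{\cdot}$ we have $\overleftarrow{\xi} = \overleftarrow{\eta} \vee \overline{a}$, so the hypothesis takes the form $\rhd \to (\overleftarrow{\eta} \vee \overline{a}) \vee \overrightarrow{\zeta}$. Now apply the substitution $y \mapsto \overleftarrow{\eta}$, $z \mapsto \overrightarrow{\zeta}$ to the axiom $\mathrm{R}_2$ corresponding to the letter $a$, obtaining
\begin{equation*}
  (\rhd \to (\overleftarrow{\eta} \vee \overline{a}) \vee \overrightarrow{\zeta}) \to (\rhd \to \overleftarrow{\eta} \vee (\overline{a} \vee \overrightarrow{\zeta})).
\end{equation*}
Modus ponens then yields $\rhd \to \overleftarrow{\eta} \vee (\overline{a} \vee \overrightarrow{\zeta})$, and since $\overline{a} \vee \overrightarrow{\zeta} = \overrightarrow{a\zeta}$ by the definition of $\overrightarrow{\cdot}$, this is precisely $\rhd \to \overleftarrow{\eta} \vee \overrightarrow{a\zeta}$. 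Applying the induction hypothesis with $\eta$ in place of $\xi$ and $a\zeta$ in place of $\zeta$ gives $\rhd \to \overrightarrow{\eta \cdot a\zeta} = \rhd \to \overrightarrow{\xi\zeta}$, as required.

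There is no real obstacle here: the lemma is essentially a formal version of the right-associativity of the syntactic operator $\vee$, and the only things to check are that the substitution into $\mathrm{R}_2$ produces the formula claimed and that the parsings $\overleftarrow{\eta a} = \overleftarrow{\eta} \vee \overline{a}$ and $\overrightarrow{a\zeta} = \overline{a} \vee \overrightarrow{\zeta}$ are applied in the right direction. The symmetry with the previous lemma (which pushed letters the other way using $\mathrm{R}_1$) makes the inductive bookkeeping straightforward.
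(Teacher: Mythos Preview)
Your proof is correct and follows essentially the same approach as the paper: induction on $|\xi|$, with the base case trivial by syntactic identity and the inductive step peeling off the last letter of $\xi = \eta a$ via a single application of $\mathrm{R}_2$ and modus ponens, then invoking the induction hypothesis for $\eta$ and $a\zeta$. The only difference is that you spell out the substitution into $\mathrm{R}_2$ explicitly, whereas the paper leaves that implicit.
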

\begin{proof}
By induction on $|\xi|$. If $|\xi| = 1$, then the formulas $\rhd \to \overleftarrow{\xi} \vee \overrightarrow{\zeta}$ and $\rhd \to \overrightarrow{\xi\zeta}$ are identical.

Now let $|\xi| \geq 2$, then $\xi = \beta a$ for a letter $a \in \mathcal{A}$ and a nonempty word $\beta$. Therefore,
\begin{equation*}
  \mathrm{R}_2,~\rhd \to \overleftarrow{\beta a} \vee \overrightarrow{\zeta}~\vdash~\rhd \to \overleftarrow{\beta} \vee \overrightarrow{a \zeta}
\end{equation*}
by modus ponens. By induction hypothesis, we have
\begin{equation*}
  \mathrm{R}_2,~\rhd \to \overleftarrow{\beta} \vee \overrightarrow{a \zeta}~\vdash~\rhd \to \overrightarrow{\xi\zeta}.
\end{equation*}
This completes the proof of the lemma.
\end{proof}

\begin{Corollary} \label{C:CodeUniforme}
$\mathrm{R}_1,~\mathrm{R}_2,~\rhd \to \overrightarrow{\xi} \vee \overrightarrow{\zeta}~\vdash~\rhd \to \overrightarrow{\xi\zeta}$, for all $\xi, \zeta \in \mathcal{A}^+$.
\end{Corollary}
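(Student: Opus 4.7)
The plan is to obtain this corollary as an immediate composition of the two preceding results in the subsection, since the right-hand side of the previous Corollary matches exactly the hypothesis of the preceding Lemma. Morally, the Corollary to the first lemma (using only $\mathrm{R}_1$) re-associates the left operand of the outer disjunction from right-associated form $\overrightarrow{\xi}$ to left-associated form $\overleftarrow{\xi}$, and the second Lemma (using only $\mathrm{R}_2$) then absorbs that left-associated prefix into the right operand, collapsing the whole expression to the single right-associated code $\overrightarrow{\xi\zeta}$.

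Concretely, from the hypothesis $\rhd \to \overrightarrow{\xi} \vee \overrightarrow{\zeta}$ together with the axiom scheme $\mathrm{R}_1$, I invoke the Corollary to the first lemma of this subsection to derive $\rhd \to \overleftarrow{\xi} \vee \overrightarrow{\zeta}$. Then, using axiom scheme $\mathrm{R}_2$, I invoke the second lemma of this subsection to derive $\rhd \to \overrightarrow{\xi\zeta}$ from this intermediate formula. Concatenating the two derivations produces a derivation of $\rhd \to \overrightarrow{\xi\zeta}$ from $\mathrm{R}_1$, $\mathrm{R}_2$, and $\rhd \to \overrightarrow{\xi}\vee\overrightarrow{\zeta}$, as required.

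There is essentially no obstacle. Both auxiliary results have already been established by induction on the length of the appropriate word, and their side conditions ($\xi,\zeta \in \mathcal{A}^+$) coincide with those of the present corollary. The degenerate cases $|\xi|=1$ or $|\zeta|=1$ need no separate treatment, since they are already absorbed by the base cases of the two induction proofs above; in particular, when $|\xi|=1$ the formulas $\overrightarrow{\xi}$ and $\overleftarrow{\xi}$ coincide, making the first step vacuous, and the second step reduces directly to the identity of $\overline{a}\vee\overrightarrow{\zeta}$ with $\overrightarrow{a\zeta}$.
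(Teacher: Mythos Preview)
Your proposal is correct and matches the paper's intended argument: the paper states this corollary without proof, treating it as an immediate consequence of the preceding Corollary (which turns $\overrightarrow{\xi}$ into $\overleftarrow{\xi}$ via $\mathrm{R}_1$) and the preceding Lemma (which collapses $\overleftarrow{\xi}\vee\overrightarrow{\zeta}$ to $\overrightarrow{\xi\zeta}$ via $\mathrm{R}_2$), exactly as you describe.
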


\begin{Lemma} \label{L:Derivability}
If $\xi \stackrel{T}{\longmapsto} \zeta$ then $P_T,~\rhd \to \overrightarrow{\xi}~\vdash~\rhd \to \overrightarrow{\zeta}$, for all $\xi, \zeta \in \mathcal{A}^+$.
\end{Lemma}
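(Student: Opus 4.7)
The plan is to prove the lemma directly from the definition of a one-step $T$-production, using only the axioms $\mathrm{T}_1$ and $\mathrm{T}_2$ of $P_T$ together with the rules of substitution and modus ponens. By definition, $\xi \stackrel{T}{\longmapsto} \zeta$ means that $\xi$ factors as $a_i \beta \gamma$ with $|\beta| = d-1$ and $\gamma \in \mathcal{A}^*$, and that $\zeta = \gamma \omega_i$. Since each $\omega_i$ is nonempty by the standing assumption, $\zeta$ lies in $\mathcal{A}^+$, so both $\overrightarrow{\xi}$ and $\overrightarrow{\zeta}$ are well defined.

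I would then split on whether the tail $\gamma$ is empty. If $|\gamma| = 0$, then $\xi = a_i \beta$ and $\zeta = \omega_i$, and the axiom $\mathrm{T}_2$ instantiated with the specific word $\alpha := \beta$ is literally the implication $(\rhd \to \overrightarrow{\xi}) \to (\rhd \to \overrightarrow{\zeta})$; one application of modus ponens to the hypothesis $\rhd \to \overrightarrow{\xi}$ yields $\rhd \to \overrightarrow{\zeta}$. If $|\gamma| \geq 1$, I would use $\mathrm{T}_1$ with $\alpha := \beta$, which gives the formula $(\rhd \to \overrightarrow{a_i \beta y}) \to (\rhd \to \overrightarrow{y \omega_i})$, and then apply the substitution $y := \overrightarrow{\gamma}$. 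The variable $y$ does not occur in $\rhd$ (which uses only $x^0$), and the right-associative definition of $\overrightarrow{\cdot}$ on the extended alphabet $\mathcal{A} \cup \mathcal{V}$ gives $\overrightarrow{a_i \beta y}[y := \overrightarrow{\gamma}] = \overrightarrow{a_i \beta \gamma} = \overrightarrow{\xi}$ and $\overrightarrow{y \omega_i}[y := \overrightarrow{\gamma}] = \overrightarrow{\gamma \omega_i} = \overrightarrow{\zeta}$. One more modus ponens with the hypothesis then yields $\rhd \to \overrightarrow{\zeta}$.

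The only point that needs checking is the bookkeeping identity for $\overrightarrow{\cdot}$ under the substitution $y := \overrightarrow{\gamma}$, and this is immediate from the definition. There is no genuine obstacle in this lemma: the axioms $\mathrm{T}_1$ and $\mathrm{T}_2$ have been tailored so that, together with a single substitution for $y$, they simulate exactly one step of the tag system. The associativity-rewriting axioms $\mathrm{R}_1,\mathrm{R}_2$ and the preceding corollaries, in particular Corollary~\ref{C:CodeUniforme}, are not required for this one-step lemma; they will presumably be needed later, when one must move between the left- and right-associated encodings $\overleftarrow{\cdot}$ and $\overrightarrow{\cdot}$ or chain several productions together.
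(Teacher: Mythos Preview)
Your argument has a genuine gap in the case $|\gamma|\geq 1$: the claimed identity $\overrightarrow{y\,\omega_i}[y:=\overrightarrow{\gamma}]=\overrightarrow{\gamma\omega_i}$ is false whenever $|\gamma|\geq 2$. The variable $y$ sits in the \emph{leftmost} slot of the right-associated expression $\overrightarrow{y\,\omega_i}=y\vee\overrightarrow{\omega_i}$, so substituting $y:=\overrightarrow{\gamma}$ yields $\overrightarrow{\gamma}\vee\overrightarrow{\omega_i}$, not $\overrightarrow{\gamma\omega_i}$. Concretely, if $\gamma=g_1g_2$ and $\omega_i=c_1$, then the substitution gives $(\overline{g_1}\vee\overline{g_2})\vee\overline{c_1}$, whereas $\overrightarrow{\gamma\omega_i}=\overline{g_1}\vee(\overline{g_2}\vee\overline{c_1})$. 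Since $\vee$ here abbreviates $(x\to y)\to y$ and is not syntactically associative, these are distinct formulas. (Your first identity, $\overrightarrow{a_i\beta\,y}[y:=\overrightarrow{\gamma}]=\overrightarrow{a_i\beta\gamma}$, \emph{is} correct, because there $y$ occupies the innermost rightmost position.)

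This is exactly why the paper's proof does not finish after the single modus ponens: from $\mathrm{T}_1$ one obtains only $\rhd\to\overrightarrow{\gamma}\vee\overrightarrow{\omega_i}$, and then Corollary~\ref{C:CodeUniforme} (which relies on the re-association axioms $\mathrm{R}_1,\mathrm{R}_2$) is invoked to pass to $\rhd\to\overrightarrow{\gamma\omega_i}$. Your remark that $\mathrm{R}_1,\mathrm{R}_2$ and Corollary~\ref{C:CodeUniforme} ``are not required for this one-step lemma'' is therefore mistaken; they are precisely what bridges the associativity mismatch introduced by the substitution.
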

\begin{proof}
Since $T$ is applicable to $\xi$, we have $|\xi| \geq d$. Therefore, $\xi = a_i \alpha \beta$ and $\zeta = \beta \omega_i$, where $|\alpha| = d-1$ and $|\beta| \geq 0$.

\noindent If $|\beta| = 0$, then

\bigskip

\begin{tabular}{rcll}
  $P_T$ & $\vdash$ & $(\rhd \to \overrightarrow{\xi}) \to (\rhd \to \overrightarrow{\zeta})$ & by the axiom $(\mathrm{T}_2)$, and \\
  $P_T,~\rhd \to \overrightarrow{\xi}$ & $\vdash$ & $\rhd \to \overrightarrow{\zeta}$ & by modus ponens. \\
\end{tabular}

\bigskip

\noindent Let $|\beta| > 0$, so

\bigskip

\begin{tabular}{rcll}
  $P_T$ & $\vdash$ & $(\rhd \to \overrightarrow{\xi}) \to (\rhd \to \overrightarrow{\beta} \vee \overrightarrow{\omega_i})$ & by the axiom $(\mathrm{T}_1)$, \\
  $P_T,~\rhd \to \overrightarrow{\xi}$ & $\vdash$ & $\rhd \to \overrightarrow{\beta} \vee \overrightarrow{\omega_i}$ & by modus ponens, \\
  $P_T,~\rhd \to \overrightarrow{\xi}$ & $\vdash$ & $\rhd \to \overrightarrow{\zeta}$ & by Corollary~\ref{C:CodeUniforme}. \\
\end{tabular}

\bigskip

\noindent The lemma is proved.
\end{proof}

\begin{Corollary} \label{C:Derivability}
If $\xi \stackrel{T}{\Longmapsto} \zeta$ then $P_T,~\rhd \to \overrightarrow{\xi}~\vdash~\rhd \to \overrightarrow{\zeta}$, for all $\xi, \zeta \in \mathcal{A}^+$.
\end{Corollary}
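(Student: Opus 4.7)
The plan is a direct induction on the length $n \geq 1$ of the chain $\gamma_1, \dots, \gamma_n$ witnessing $\xi \stackrel{T}{\Longmapsto} \zeta$, where $\xi = \gamma_1$, $\zeta = \gamma_n$, and $\gamma_i \stackrel{T}{\longmapsto} \gamma_{i+1}$ for $1 \leq i \leq n-1$. In the base case $n = 1$ we have $\xi = \zeta$, so the claim is trivial: the desired conclusion $\rhd \to \overrightarrow{\zeta}$ is already among the hypotheses. For the inductive step, given a chain of length $n+1$, the inductive hypothesis applied to the prefix $\gamma_1, \dots, \gamma_n$ yields $P_T,\ \rhd \to \overrightarrow{\xi}\ \vdash\ \rhd \to \overrightarrow{\gamma_n}$, and Lemma~\ref{L:Derivability} applied to the single step $\gamma_n \stackrel{T}{\longmapsto} \gamma_{n+1} = \zeta$ yields $P_T,\ \rhd \to \overrightarrow{\gamma_n}\ \vdash\ \rhd \to \overrightarrow{\zeta}$. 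Concatenating the two derivations (using transitivity of $\vdash$) produces the required derivation of $\rhd \to \overrightarrow{\zeta}$ from $P_T$ and $\rhd \to \overrightarrow{\xi}$.

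The only mild bookkeeping point is to verify that each intermediate word $\gamma_i$ lies in $\mathcal{A}^+$, so that the notation $\overrightarrow{\gamma_i}$ makes sense at every step of the induction: this holds because a single $T$-production rewrites a word of length $\geq d$ to one of length $\geq |\omega_j| \geq 1$, using the standing assumption that every word $\omega_j$ of $T$ is nonempty. Since Lemma~\ref{L:Derivability} already uses exactly the axioms $\mathrm{T}_1, \mathrm{T}_2, \mathrm{R}_1, \mathrm{R}_2$ that constitute $P_T$, no additional axioms are needed. There is no real obstacle here; the statement is essentially the transitive closure of the single-step simulation lemma.
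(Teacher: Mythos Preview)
Your proof is correct and matches the paper's approach: the paper simply states that the corollary is ``trivial by definition of the tag system,'' which is exactly the induction on the length of the $T$-production chain that you spell out, invoking Lemma~\ref{L:Derivability} at each step. Your added remark that every intermediate $\gamma_i \in \mathcal{A}^+$ (so that $\overrightarrow{\gamma_i}$ is defined) is a nice point the paper leaves implicit.
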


The proof is trivial by definition of the tag system.

\subsection{Production of the $P_T$-derivations}

Here we show that the tag system $T$ can produce, on the input word $\omega$, the words whose codes have derivations in $P_{T, \omega, P_0}$ of a ``small'' height (to be defined below). As a preliminary let us introduce some notation and prove auxiliary lemmas.

Given $\alpha \in \mathcal{A}^*$, denote by $\mathsf{Code}_T(\alpha)$ the set of formulas:
\begin{equation*}
  \mathsf{Code}_T(\alpha) := \bigcup_{\beta \in \mathcal{A}^*,~\alpha \stackrel{T}{\Longmapsto} \beta} \mathsf{Code}(\beta).
\end{equation*}
It is clear that $\mathsf{Code}(\alpha) \subseteq \mathsf{Code}_T(\alpha)$ for all $\alpha \in \mathcal{A}^*$.

For any propositional calculus $P$, denote by $\left\langle P \right\rangle$ the set of propositional formulas obtained from $P$ by applying modus ponens and substitution once:
\begin{align*}
  \left\langle P \right\rangle := & \left\{ B \mid A, A \to B \in P \text{ for some formula } A \right\} \cup \\
  & \left\{ \sigma A \mid A \in P \text{ and } \sigma \text{ is a substitution} \right\}.
\end{align*}
Furthermore, let $\left\langle P \right\rangle_0 = P$ and
\begin{equation*}
  \left\langle P \right\rangle_{n+1} = \left\langle \left\langle P \right\rangle_n \right\rangle
\end{equation*}
for $n \geq 0$. It follows easily that $\left\langle P \right\rangle_n \subseteq \left\langle P \right\rangle_{n+1}$ for all $n \geq 0$ and the set $[P]$ of all derivable formulas of the calculus $P$ can be represented as
\begin{equation*}
  [P] = \left\langle P \right\rangle_{\infty} = \bigcup_{n \geq 0} \left\langle P \right\rangle_n.
\end{equation*}
Let $A$ be a formula derivable from $P$. We say that $A$ has the \emph{derivation height} $n$, if $A \in \left\langle P \right\rangle_n$ and $A \notin \left\langle P \right\rangle_{n-1}$.

Consider the tag system $T$ and the calculus $P_{T,\omega,P_0}$. Let $T$ halts on the input word $\omega$, we take the minimal $n \geq 0$ such that $\left\langle P_{T,\omega,P_0} \right\rangle_n$ contains at least one substitution instance of the code of some word $\alpha \in \mathcal{A}^*$ with $|\alpha| < d$:
\begin{equation*}
  N_{\omega} = \min \{ n \geq 0 \mid \mathsf{Code}^*(\alpha) \cap \left\langle P_{T, \omega, P_0} \right\rangle_{n} \neq \emptyset, \text{ for some } \alpha \in \mathcal{A}^* \text{ with } |\alpha| < d \}.
\end{equation*}
If $T$ does not halt, then we put $N_{\omega} = \infty$. Recall that $\mathsf{Code}^*(\alpha)$ is the set of all substitution instances of formulas in $\mathsf{Code}(\alpha)$. Denote $P_{T, P_0} = P_T \cup \{\mathrm{H}\}$.

\begin{Lemma} \label{L:FormOfDerivations}
$\left\langle P_{T,\omega,P_0} \right\rangle_{N_{\omega}} \subseteq \mathsf{Code}_T^*(\omega) \cup P_{T, P_0}^*$ for all $\omega \in \mathcal{A}^*$.
\end{Lemma}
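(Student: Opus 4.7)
My plan is to prove the inclusion by induction on $n$, for $0 \leq n \leq N_{\omega}$. The base case $n = 0$ is immediate: the axiom $\mathrm{W}_{\omega} = \rhd \to \overrightarrow{\omega}$ is the canonical (Type~0) code of $\omega$, while the axioms $\mathrm{T}_1, \mathrm{T}_2, \mathrm{R}_1, \mathrm{R}_2, \mathrm{H}$ all lie in $P_{T, P_0}$. For the inductive step (from $n$ to $n+1$, where $n+1 \leq N_{\omega}$), substitution trivially preserves both $\mathsf{Code}_T^*(\omega)$ and $P_{T,P_0}^*$, since each is defined as a set of substitution instances, so I need only handle an application of modus ponens from premises $A$ and $A \to B$ belonging to $\mathsf{Code}_T^*(\omega) \cup P_{T,P_0}^*$.

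The key structural observation is that every code of a word has the form $\rhd \to G$, while every axiom of $P_{T,P_0}$ has the form $(\rhd \to E) \to F$. Accordingly, a substitution instance of a code has antecedent $\sigma(\rhd)$, whereas a substitution instance of an axiom has antecedent $\sigma(\rhd \to E) = \sigma(\rhd) \to \sigma(E)$. I split on where $A \to B$ lives. If $A \to B$ is a substitution instance of a code, then its antecedent $A$ equals $\sigma(\rhd)$; but $A$ is simultaneously a substitution instance of either some $\rhd \to G'$ (if $A \in \mathsf{Code}_T^*(\omega)$) or some $(\rhd \to E') \to F'$ (if $A \in P_{T,P_0}^*$), and both possibilities contradict Lemma~\ref{L:Triangle}. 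Similarly, if $A \to B$ is a substitution instance of an axiom and $A \in P_{T,P_0}^*$, then $A$ is a common substitution instance of $\rhd \to E$ and of $(\rhd \to E') \to F'$, which forces $\rhd$ and $\rhd \to E'$ to be unifiable, again contradicting Lemma~\ref{L:Triangle}.

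The only surviving case is $A \in \mathsf{Code}_T^*(\omega)$ and $A \to B \in P_{T,P_0}^*$. Here $A \to B = \sigma((\rhd \to E) \to F)$ for some axiom and $A = \sigma'(\rhd \to G')$ for some code of a word $\beta$ with $\omega \stackrel{T}{\Longmapsto} \beta$; matching antecedents yields $\sigma(\rhd) = \sigma'(\rhd)$ and $\sigma(E) = \sigma'(G')$, and I must show $B = \sigma(F)$ lies in $\mathsf{Code}_T^*(\omega) \cup P_{T,P_0}^*$. I case on the axiom. For $\mathrm{H}$ the matching would make $A$ a substitution instance of the canonical code of some $\alpha$ with $|\alpha| < d$; but $A \in \left\langle P_{T,\omega,P_0} \right\rangle_n$ with $n < N_{\omega}$, contradicting the minimality of $N_{\omega}$, so $\mathrm{H}$ does not fire. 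For each of $\mathrm{T}_1, \mathrm{T}_2, \mathrm{R}_1, \mathrm{R}_2$ the consequent $F$ has the form $\rhd \to F'$, so $B = \sigma(\rhd) \to \sigma(F')$ is again of code shape, and Lemma~\ref{L:AlphabeticFormulas}, together with repeated peeling of the outermost $\vee$, constrains which of the four code types of $\beta$ admit a match and determines the word encoded by $B$: for $\mathrm{T}_1, \mathrm{T}_2$ it is a $T$-production of $\beta$ (hence reachable from $\omega$), and for $\mathrm{R}_1, \mathrm{R}_2$ it is $\beta$ itself re-encoded by a different code type. In every case $B \in \mathsf{Code}_T^*(\omega)$, completing the induction.

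The main obstacle will be the bookkeeping of the last case analysis. Although Lemma~\ref{L:AlphabeticFormulas} eliminates most (axiom, code type) pairs quickly by forbidding unification of distinct alphabetic formulas, the $\mathrm{R}_1$ and $\mathrm{R}_2$ subcases require careful tracking of how the left-nested $\overleftarrow{\cdot}$ and the right-nested $\overrightarrow{\cdot}$ portions on each side split between the fixed letter positions of the axiom and the free variables $y, z, u$, in order to identify the resulting $B$ as a code of the correct type (0, 1, 2, or 3) and of the correct word $\beta$, and to rule out the spurious matches that would otherwise require an occurs-check-violating recursive equation on $\sigma'(x^0)$.
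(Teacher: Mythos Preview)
Your proposal is correct and follows essentially the same route as the paper: induction on $n \le N_\omega$, the substitution case being trivial, then for modus ponens using Lemma~\ref{L:Triangle} to force $A\to B\in P_{T,P_0}^*$ and $A\in\mathsf{Code}_T^*(\omega)$, followed by a five-way case split on the axiom, with $\mathrm{H}$ excluded by the minimality of $N_\omega$ and the remaining four handled via Lemma~\ref{L:AlphabeticFormulas}. Your anticipation of the $\mathrm{R}_1/\mathrm{R}_2$ bookkeeping (tracking how $\overleftarrow{\cdot}$ and $\overrightarrow{\cdot}$ split across the axiom's free variables and recognising the result as a code of the appropriate type) is exactly the level of care the paper elides behind its appeal to Lemma~\ref{L:AlphabeticFormulas}.
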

\begin{proof}
We will prove by induction on $n \leq N_{\omega}$ that
\begin{equation*}
  \left\langle P_{T,\omega,P_0} \right\rangle_n \subseteq \mathsf{Code}_T^*(\omega) \cup P_{T, P_0}^*.
\end{equation*}

If $n = 0$, then $\left\langle P_{T,\omega,P_0} \right\rangle_0 = P_{T,\omega,P_0}$. It can easily be checked that the axiom $\mathrm{W}_{\omega}$ is in $\mathsf{Code}_T^*(\omega)$ and all the other axioms of $P_{T,\omega,P_0}$ are in $P_{T, P_0}^*$.

Let the induction assumption be satisfied for some $1 \leq n < N_{\omega}$. Since the right-hand side of the inclusion is closed under substitution, we only consider the case of a formula $B$ obtained by modus ponens from some formulas $A,\ A \to B \in \left\langle P_{T,\omega,P_0} \right\rangle_n$. By induction hypothesis,
\begin{equation*}
  \left\langle P_{T,\omega,P_0} \right\rangle_n \subseteq \mathsf{Code}_T^*(\omega) \cup P_{T, P_0}^*.
\end{equation*}
We claim that (1) $A \to B \in P_{T, P_0}^*$, and (2) $A \in \mathsf{Code}_T^*(\omega)$. Proofs are below. Then we will show that $B \in \mathsf{Code}_T^*(\omega)$, which suffices for proving Lemma~\ref{L:FormOfDerivations}.

Note that $\mathsf{Code}_T^*(\omega) \cap P_{T, P_0}^* = \emptyset$, due to Lemma~\ref{L:Triangle}.

\textbf{Proof of (1):} Assume the contrary: $A \to B \in \mathsf{Code}_T^*(\omega)$. Since $\rhd$ is the premise of any code of any word, we have $A \in \rhd^*$. However, $A \in \mathsf{Code}_T^*(\omega) \cup P_{T, P_0}^*$, which is impossible, because all formulas in $\mathsf{Code}_T^*(\omega)$ and $P_{T, P_0}^*$ have the form $(\rhd \to C)$ or $(\rhd \to C) \to D$, for some $C$, $D$, and so are not unifiable with $\rhd$ by Lemma~\ref{L:Triangle}.

\textbf{Proof of (2):} Assume the contrary: $A \in P_{T, P_0}^*$. Then $A$ is a substitution instance of a formula of the from $(\rhd \to C) \to D$, for some $C$, $D$. By (1), $A \to B \in P_{T, P_0}^*$. So, $A \to B$ is a substitution instance of a formula of the from $(\rhd \to E) \to F$, for some $E$ and $F$. This would apply that $(\rhd \to C)$ is unifiable with $\rhd$, which is impossible by Lemma~\ref{L:Triangle}.

We are going to show that $B \in \mathsf{Code}_T^*(\omega)$. Since $A \in \mathsf{Code}^*(\xi)$ for some word $\xi \in \mathcal{A}^+$ such that $\omega \stackrel{T}{\Longmapsto} \xi$, and $A \to B$ is a substitution instance of some of the 5 axioms in $P_{T, P_0}$, we need to consider the following 5 cases.

\textbf{Case 1.} $A \to B$ is a substitution instance of the axiom $\mathrm{T}_1$. Hence
\begin{equation*}
  A \in \left( \rhd \to \overrightarrow{a_i \alpha y} \right)^*
\end{equation*}
for some letter $a_i \in \mathcal{A}$ and a word $\alpha \in \mathcal{A}^*$ such that $|\alpha| = d-1$. Since the formula $A \in \mathsf{Code}^*(\xi)$, it is easily shown by Lemma~\ref{L:AlphabeticFormulas} that
\begin{equation*}
  A \in \left( \rhd \to \overrightarrow{a_i \alpha \gamma} \right)^*
\end{equation*}
for some $\gamma \in \mathcal{A}^+$, so that $\xi = a_i \alpha \gamma$. Therefore $B$ is the substitution instance of the code
\begin{equation*}
 \rhd \to \overrightarrow{\gamma} \vee \overrightarrow{\omega_i}
\end{equation*}
for the word $\zeta = \gamma \omega_i$ and $\xi \stackrel{T}{\longmapsto} \zeta$.

\textbf{Case 2.} $A \to B$ is a substitution instance of the axiom $\mathrm{T}_2$. Hence
\begin{equation*}
  A \in \left( \rhd \to \overrightarrow{a_i \alpha} \right)^*
\end{equation*}
for some letter $a_i \in \mathcal{A}$ and a word $\alpha \in \mathcal{A}^*$ such that $|\alpha| = d-1$. So, $\xi = a_i \alpha$. Therefore $B$ is the substitution instance of the code
\begin{equation*}
  \rhd \to \overrightarrow{\omega_i}
\end{equation*}
for the word $\zeta = \omega_i$ and $\xi \stackrel{T}{\longmapsto} \zeta$.

\textbf{Case 3.} $A \to B$ is a substitution instance of the axiom $\mathrm{H}$. This case is impossible, since otherwise we would have $A \in (\rhd \to \overrightarrow{\alpha})^*$ for some $\alpha \in \mathcal{A}^*$, $0 < |\alpha| < d$. This contradicts to the fact that
\begin{equation*}
  \left( \rhd \to \overrightarrow{\alpha} \right)^* \cap \left\langle P_{T, \omega, P_0} \right\rangle_{n} \neq \emptyset
\end{equation*}
and $n < N_{\omega}$.

\textbf{Case 4.} $A \to B$ is a substitution instance of the axiom $\mathrm{R}_1$. Hence
\begin{equation*}
  A \in \left( \rhd \to \left( y \vee \overrightarrow{az} \right) \vee u \right)^*
\end{equation*}
for some $a \in \mathcal{A}$. Since the formula $A \in \mathsf{Code}^*(\xi)$, we have by Lemma~\ref{L:AlphabeticFormulas} that
\begin{equation*}
  A \in \left( \rhd \to \left( \overleftarrow{\xi_1} \vee \overrightarrow{a\xi_2} \right) \vee \overrightarrow{\xi_3} \right)^*
\end{equation*}
for some $\xi_1, \xi_2, \xi_3 \in \mathcal{A}^+$ such that $\xi = \xi_1 a \xi_2 \xi_3$. Therefore $B$ is a substitution instance of the code
\begin{equation*}
  \rhd \to \left( \overleftarrow{\xi_1 a} \vee \overrightarrow{\xi_2} \right) \vee \overrightarrow{\xi_3}
\end{equation*}
for the same word $\xi = \xi_1 a \xi_2 \xi_3$.

\textbf{Case 5.} $A \to B$ is a substitution instance of the axiom $\mathrm{R}_2$. Hence
\begin{equation*}
  A \in \left( \rhd \to \overleftarrow{ya} \vee z \right)^*
\end{equation*}
for some $a \in \mathcal{A}.$ Since the formula $A \in \mathsf{Code}^*(\xi)$, we have by Lemma~\ref{L:AlphabeticFormulas} that
\begin{equation*}
  A \in \left( \rhd \to \overleftarrow{\xi_1 a} \vee \overrightarrow{\xi_2} \right)^*
\end{equation*}
for some $\xi_1, \xi_2 \in \mathcal{A}^+$ such that $\xi = \xi_1 a \xi_2$. Therefore $B$ is the substitution instance of the code
\begin{equation*}
  \rhd \to \overleftarrow{\xi_1} \vee \overrightarrow{a \xi_2}
\end{equation*}
for the same word $\xi = \xi_1 a \xi_2$.

Cases 1, 2, 3, 4, 5 exhaust all possibilities and so we have that $B \in \mathsf{Code}^*(\zeta)$ for some word $\zeta \in \mathcal{A}^*$ such that $\xi \stackrel{T}{\Longmapsto} \zeta$. Then $B \in \mathsf{Code}_T^*(\omega)$, since $\omega \stackrel{T}{\Longmapsto} \xi$ by induction hypothesis. The proof is completed.
\end{proof}

Now we prove that the code of each nonempty word over $\mathcal{A}$ derivable from $P_{T,\omega,P_0}$ with the derivation height less then or equal to $N_{\omega}$ is the code of a word produced from $\omega$ by the tag system $T$.

\begin{Corollary} \label{C:Production}
If $\mathsf{Code}^*(\alpha) \cap \left\langle P_{T,\omega,P_0} \right\rangle_{N_{\omega}} \neq \emptyset$ then $\omega \stackrel{T}{\Longmapsto} \alpha$, for all $\alpha \in \mathcal{A}^+$.
\end{Corollary}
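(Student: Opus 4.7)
The plan is to combine Lemma~\ref{L:FormOfDerivations} with the two non-unifiability results already available, namely Lemmas~\ref{L:Triangle} and~\ref{L:AlphabeticFormulas}. Fix any formula $A \in \mathsf{Code}^*(\alpha) \cap \left\langle P_{T,\omega,P_0} \right\rangle_{N_{\omega}}$. By Lemma~\ref{L:FormOfDerivations} applied at level $N_{\omega}$, $A$ must lie in $\mathsf{Code}_T^*(\omega) \cup P_{T, P_0}^*$, so the proof reduces to two tasks: (i) excluding the right disjunct, and (ii) identifying the word witnessed by the left disjunct with $\alpha$.

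For step (i), observe that $A$ is a substitution instance of some code $\rhd \to E_\alpha$ with $E_\alpha$ an $\mathcal{A}$-formula, while every axiom of $P_{T, P_0}$ (i.e.\ $\mathrm{T}_1$, $\mathrm{T}_2$, $\mathrm{R}_1$, $\mathrm{R}_2$, $\mathrm{H}$) has the shape $(\rhd \to C) \to D$. Unifying $\rhd \to E_\alpha$ with a formula of the form $(\rhd \to C) \to D$ would force the left-hand sides $\rhd$ and $\rhd \to C$ to unify, which Lemma~\ref{L:Triangle} forbids. Hence $A \in \mathsf{Code}_T^*(\omega)$, so there exists some $\beta \in \mathcal{A}^+$ with $\omega \stackrel{T}{\Longmapsto} \beta$ and $A \in \mathsf{Code}^*(\beta)$.

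For step (ii), write the two relevant codes as $\rhd \to E_\alpha$ and $\rhd \to E_\beta$, both with $\mathcal{A}$-formula right-hand sides. Because $A$ is a common substitution instance of both, the $\mathcal{A}$-formulas $E_\alpha$ and $E_\beta$ are unifiable, hence equal by Lemma~\ref{L:AlphabeticFormulas}. What then remains is to argue that the common $\mathcal{A}$-formula determines its underlying word: reading its letter-code leaves from left to right recovers $\alpha$ on one side and $\beta$ on the other. This follows from a direct inspection of the four code Types~0--3 together with the fact (again by Lemma~\ref{L:AlphabeticFormulas}) that distinct letter codes $\overline{a_i}$ are pairwise non-unifiable, so the leaf sequence is unambiguous. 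I expect the main obstacle to lie precisely in this last bookkeeping step: cross-type matches (say, a Type~1 code of $\alpha$ against a Type~3 code of $\beta$) must be shown to force $\alpha = \beta$ on the nose rather than merely some weaker factorization-compatibility relation, but this is ultimately a finite case check on how $\overrightarrow{\cdot}$ and $\overleftarrow{\cdot}$ combine under $\vee$.
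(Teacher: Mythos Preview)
Your proof is correct and follows the same route as the paper: invoke Lemma~\ref{L:FormOfDerivations}, rule out $P_{T,P_0}^*$ via Lemma~\ref{L:Triangle}, and then deduce $\omega \stackrel{T}{\Longmapsto} \alpha$ from $\mathsf{Code}^*(\alpha)\cap\mathsf{Code}_T^*(\omega)\neq\emptyset$. The paper compresses your step~(ii) into a single clause (``by definition of the set $\mathsf{Code}_T^*(\omega)$''), tacitly relying on Lemma~\ref{L:AlphabeticFormulas}; your explicit treatment is welcome, and your leaf-sequence observation already disposes of the cross-type worry uniformly --- no finite case check is actually needed, since for every Type~0--3 code the left-to-right sequence of letter-code leaves is precisely the underlying word.
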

\begin{proof}
By Lemma~\ref{L:FormOfDerivations}, we have
\begin{equation*}
  \left\langle P_{T,\omega,P_0} \right\rangle_{N_{\omega}} \subseteq \mathsf{Code}_T^*(\omega) \cup P_{T, P_0}^*.
\end{equation*}
Furthermore, the application of Lemma~\ref{L:Triangle} yields
\begin{equation*}
  \mathsf{Code}_T^*(\omega) \cap P_{T, P_0}^* = \emptyset.
\end{equation*}
It is obvious that $\mathsf{Code}^*(\alpha) \cap P_{T, P_0}^* = \emptyset$. Hence $\mathsf{Code}^*(\alpha) \cap \mathsf{Code}_T^*(\omega) \neq \emptyset$, and so $\omega \stackrel{T}{\Longmapsto} \alpha$ by definition of the set $\mathsf{Code}_T^*(\omega)$. The lemma is proved.
\end{proof}

\section{The proof of Theorem~\ref{T:main}}

Let us show that the following problem is undecidable: given a tag system $T$ and a word $\omega \in \mathcal{A}$, determine whether $P_0 \leq P_{T, \omega, P_0}$.

Indeed, if the tag system $T$ halts on the input word $\omega$, then $\omega \stackrel{T}{\Longmapsto} \alpha$ for some word $\alpha \in \mathcal{A}^+$ such that $|\alpha| < d$. Hence the code $\rhd \to \overrightarrow{\alpha}$ of $\alpha$ is derivable from $P_{T, \omega, P_0}$ by Corollary~\ref{C:Derivability}. If we recall that $P_{T, \omega, P_0}$ contains the formula
\begin{equation*}
  (\rhd \to \overrightarrow{\alpha}) \to A
\end{equation*}
for every $A \in P_0$, we obtain that $P_0 \leq P_{T, \omega, P_0}$.

Now assume $P_0 \leq P_{T, \omega, P_0}$. Since $\mathbf{Int}_{\{\to\}} \leq P_0$, so by Lemma~\ref{L:DerivabilityOfAFormulas}, we have
\begin{equation*}
  P_{T,\omega,P_0} \vdash \rhd \to \overrightarrow{\alpha}
\end{equation*}
for every $\alpha$ such that $|\alpha| < d$. Hence $N_{\omega} < \infty$. Fix any word $\alpha$ with $|\alpha| < d$ such that $\mathsf{Code}^*(\alpha) \cap \left\langle P_{T,\omega,P_0} \right\rangle_{N_{\omega}} \neq \emptyset$. By Corollary~\ref{C:Production}, we obtain $\omega \stackrel{T}{\Longmapsto} \alpha$. Therefore, $T$ halts on $\omega$.

Thus, we reduce the halting problem of tag systems to the problem of recognizing extensions for the $\Sigma$-calculus $P_0$. Since the halting problem of tag systems is undecidable by Theorem~\ref{T:Minsky} and $P_{T, \omega, P_0} \leq P_0$ by Corollary~\ref{C:Inclusion}, this completes the proof of undecidability of recognizing completeness. As corollary we have the undecidability of problem of recognizing axiomatizations.

\section{Acknowledgement}

The author is grateful to Evgeny Zolin for useful comments and advices that improved the manuscript.

%%%%%%%%%%%%%%%%%%%%%%%%%%%%%%%%%%%%%%%%%%%%%%%%%%%%%%%%%%%%%%%%%%%
%%  BIBLIOGRAPHY                                                 %%
%%%%%%%%%%%%%%%%%%%%%%%%%%%%%%%%%%%%%%%%%%%%%%%%%%%%%%%%%%%%%%%%%%%

\end{document}